\definecolor{dgreen}{rgb}{0,.6,0}
\newtheorem{theorem}{\bf Theorem}[section]
\newtheorem{lemma}[theorem]{\bf Lemma}
\newtheorem{corollary}[theorem]{\bf Corollary}
\newenvironment{proof}{\noindent{\em Proof:}}{\quad \hfill$\Box$\vspace{2ex}}
\def\ZZ{{\mathbb Z}}
\def\II{{\mathbb I}}
\def\FF{{\mathbb F}}
\def\NN{{\mathbb N}}
\def\RR{{\mathbb R}}
\def\TT{{\mathbb T}}
\def\TTm{{\mathbb T}^m}
\def\ZZm{{\mathbb Z}^m}
\def\IIm{{\mathbb I}^m}
\def\ZZmp{{\mathbb Z}^m_+}
\def\ZZms{{\mathbb Z}^m_*}
\def\IIi{{\mathbb I}^\infty}
\def\RRi{{\mathbb R}^\infty}
\def\TTi{{\mathbb T}^\infty}
\def\RRi{{\mathbb R}^\infty}
\def\ZZi{{\mathbb Z}^{\infty}}
\def\ZZi{{\mathbb Z}^{\infty}}
\def\ZZis{{\mathbb Z}^{\infty}_*}
\def\ZZip{{\mathbb Z}^\infty_+}
\def\Ii{{\mathcal I}}
\def\Jj{{\mathcal J}}
\def\Ll{{\mathcal L}}
\def\Pp{{\mathcal P}}
\def\Ss{{\mathcal S}}
\def\Tt{{\mathcal T}}
\def\Uu{{\mathcal U}}
\def\Vv{{\mathcal V}}
\def\supp{\operatorname{supp}}
\newcommand{\ba}{{\bf a}}
\newcommand{\bb}{{\bf b}}
\newcommand{\bc}{{\bf c}}
\newcommand{\bd}{{\bf d}}
\newcommand{\bk}{{\bf k}}
\newcommand{\bs}{{\bf s}}
\newcommand{\bx}{{\bf x}}
\newcommand{\by}{{\bf y}}
\newcommand{\rep}[1]{\tilde{#1}}
\def\LiD{L_\infty(\TT^m)}
\def\Wi{W^1_\infty(\TT^m)}
\newcommand{\Id}{\operatorname{Id}}
\newlength{\fixboxwidth}
\title{$\varepsilon$-dimension in infinite dimensional hyperbolic cross approximation and application to parametric elliptic PDEs}
\author{ 
Dinh D\~ung \footnote{Corresponding author at: Information Technology Institute, Vietnam National University, Hanoi, 
144 Xuan Thuy, Cau Giay, Hanoi, Vietnam.} \\[2mm]
 Information Technology Institute, Vietnam National University, Hanoi, \\
144 Xuan Thuy, Cau Giay, Hanoi, Vietnam\\
{dinhzung@gmail.com}\\[5mm] 
Michael Griebel  \\[2mm]
Institute for Numerical Simulation, Bonn University \\
Wegelerstrasse 6, 53115 Bonn, Germany \\
Fraunhofer Institute for Algorithms and Scientific Computing SCAI\\
Schloss Birlinghoven, 53754 Sankt Augustin, Germany \\
{griebel@ins.uni-bonn.de}\\[5mm] 
 Vu Nhat Huy \\[2mm]
College of Science,  Vietnam National University, Hanoi\\
334 Nguyen Trai, Thanh Xuan, Hanoi, Vietnam\\
{nhat{\_}huy85@yahoo.com} \\[5mm]
Christian Rieger \\[2mm]
Institute for Numerical Simulation, Bonn University \\
Wegelerstrasse 6, 53115 Bonn, Germany \\
{rieger@ins.uni-bonn.de}
}
\date{\ttfamily  July 07, 2016 - Version 1.1}
\begin{document}
\maketitle
\begin{abstract}
In this article, we present a cost-benefit analysis of the approximation in tensor products of Hilbert spaces of Sobolev-analytic type. 
The Sobolev part is defined on a finite dimensional domain, whereas the analytical space is defined on an infinite dimensional domain. 
As main mathematical tool, we use the $\varepsilon$-dimension of a subset in a Hilbert space. The $\varepsilon$-dimension gives the lowest number of linear information that is needed to approximate an element from the set in the norm of the Hilbert space up to an accuracy $\varepsilon>0$. From a practical point of view this means that we a priori fix an accuracy and ask for the amount of information to achieve this accuracy.  Such an analysis usually requires sharp estimates on the cardinality of certain index sets which are in our case infinite-dimensional hyperbolic crosses. 
As main result, we obtain sharp bounds of the $\varepsilon$-dimension of the Sobolev-analytic-type function classes which depend only on the smoothness differences in the Sobolev spaces and the dimension of the finite dimensional domain where these spaces are defined.
This implies in particular that, up to constants, the costs of the infinite dimensional (analytical) approximation problem is dominated by the finite-variate Sobolev approximation problem. We demonstrate this procedure with an examples of functions spaces stemming from the regularity theory of parametric partial differential equations.

\medskip
\noindent
{\bf Keywords}: infinite-dimensional hyperbolic cross approximation, mixed Sobolev-Korobov-type smoothness, mixed Sobolev-analytic-type smoothness, $\varepsilon$-dimension, parametric and stochastic elliptic PDEs, linear information.

\end{abstract}

\section{Introduction}

The main emphasis of this paper lies on the cost-benefit ratio of the approximation for a class of functions stemming from an anisotropic tensor product of smoothness spaces. 
Let $X$ be a Hilbert space and $W \subset X$ a subset of $X$. Since we are interested in the cost-benefit ratio of the approximation, we focus on the so-called $\varepsilon$-dimension $n_\varepsilon = n_\varepsilon(W,X)$.  
It is defined as 
\begin{align}\nonumber
n_\varepsilon(W,X)
:= \ 
{
\inf \left\{n:\, \exists  \ M_n: 
\ \dim M_n \le n, \ \sup_{w \in W} \ \inf_{v \in M_n} \|w - v\|_X \le \varepsilon \right\}
},
\end{align}
where $M_n \subset X$ is a linear manifold in $X$ of dimension $\le n$.
Hence, $n_\varepsilon(W,X)$ is the smallest number of linear functionals that are needed by an algorithm to give for all $f \in W$ an approximation with an error of at most $\varepsilon$.
The important concept here is the fact that an approximation quality $\varepsilon>0$ is a priori fixed  and the approximation space realizing this approximation error is searched. 
This is the inverse of the usual Kolmogorov $n$-width $d_n(W,X)$ \cite{Ko36} which is given by
\begin{equation} \nonumber
d_n(W,X)
:= \ 
\inf_{{M_n}} \ \sup_{w \in W} \ \inf_{v \in {M_n}} \|w - v\|_X,
\end{equation} 
where the outer infimum is taken over all linear manifolds {$M_n$} in $X$ of dimension at most $n$.
\footnote{A different worst-case setting is represented by the linear $n$-width $\lambda_n(W,X)$ \cite{Ti60}.
 This corresponds to a characterization of the best linear approximation error, see, e.g., \cite{DU13} for definitions.
 Since $X$ is here a Hilbert space, both concepts coincide, i.e., we have
 $d_n(W,X) \ = \ \lambda_n(W,X).$
 }
For a survey and a bibliography on computational complexity see the monographs \cite{NW08, NW10}.
 
To be more specific, we deal with functions defined on a product domain $\TTm \times \IIi$, where $\IIi$ is infinite dimensional and $\TTm$ is $m<\infty$ dimensional.
The fundamental space is defined 
\begin{equation}\nonumber
\Ll:=\left\{v \in \Ll \ : \ v := \sum_{(\bk,\bs) \in \Ii \times \Jj} v_{\bk,\bs} \phi_{\bk,\bs} \quad \text{such that} \quad \sum_{(\bk,\bs) \in \Ii \times \Jj} |v_{\bk,\bs}|^2<\infty \right\},
\end{equation}
where $\phi_{\bk,\bs}$ denotes an orthonormal system with respect to the inner product
\begin{equation} \nonumber
\left( v,w\right)_{\Ll}=\sum_{(\bk,\bs) \in \Ii \times \Jj} v_{\bk,\bs} \bar{w}_{\bk,\bs}.
\end{equation}
In order to study approximation numbers such as $n_\varepsilon(W,X)$, we need to define 
the smoothness space $X$ and the smoothness class $W$ as well. Smoothness spaces are modeled here by general sequences of scalars 
$\lambda := \{\lambda(\bk,\bs)\}_{(\bk,\bs) \in \Ii \times \Jj}$ with 
$\lambda(\bk,\bs) \not= 0$.
Then, we define the associated space (see \eqref{Llambda}
\begin{equation}\label{spacelambda}
\Ll^\lambda:=\left\{v \in \Ll \ : \ \text{there exists } \rep{v} \in \Ll \text{ such that } v := \sum_{(\bk,\bs) \in \Ii \times \Jj} \frac{\rep{v}_{\bk,\bs}}{\lambda(\bk,\bs)} \, \phi_{\bk,\bs}\right\}
\end{equation}
The norm on $\Ll^\lambda$ is defined by (see \eqref{P-Id[norm-Ll^lambda]}
\begin{equation*}
\|v\|^{2}_{\Ll^\lambda} \ := \|\rep{v}\|^{2}_\Ll=\sum_{(\bk,\bs) \in \Ii \times \Jj} |\lambda(\bk,\bs)|^2 \,  |v_{\bk,\bs}|^2,
\end{equation*}
where $\tilde{v}$ is defined in \eqref{spacelambda}.
Let us assume to have two such sequences $\lambda$ and $\nu$ with $\nu \le \lambda$ in a point-wise sense.
Then we can chose 
\begin{align*}
V=\Ll^{\nu} \quad \text{and} \quad W=\Uu^{\lambda},
\end{align*}
 where $\Uu^\lambda$ denotes the unit ball in $\Ll^\lambda$.
Hence, we are left with estimating $n_{\varepsilon}(\Uu^{\lambda},\Ll^{\nu})$. 
To account for the fact that we work on a  product domain $\TTm \times \IIi$, the concrete smoothness spaces are parametrized by a number $a$ and a 
sequence $\bb$ such that 
$\rho_{a,\bb}(\bk,\bs)$ are product and order dependent weights (see also \eqref{[lambda{a,mu,br}]})
\begin{equation}\label{introrho}
\rho_{a,\bb}(\bk,\bs)
 := \
 \max_{1\le j \le m}|k_j|^a \, \frac{\bs!}{|\bs|_1!} \bb^{-\bs}.
\end{equation}
Both $\lambda$ and $\nu$ will be of this specific form. We provide a motivation for such classes of functions spaces by considering the regularity spaces arising in the theory of parametric partial differential equations (PDEs).
The simpler case of tensor product weights
\begin{equation*}
\rho_{a,\bb}(\bk,\bs)
 := \
 \max_{1\le j \le m}|k_j|^a \,  \bb^{-\bs}
\end{equation*}
was already treated in \cite{DG16}. 
The main result of this paper is the fact that the $\varepsilon$-dimension of our Sobolev-analytic-type function 
class depends only on the smoothness differences in the finite-variate Sobolev spaces and the dimension of the finite dimensional domain where these spaces are defined.
This implies in particular that, up to constants, the costs of solving the infinite dimensional (analytical) approximation problem are dominated by the finite-variate Sobolev-smooth approximation problem.

\bigskip
The remainder of the paper is organized as follows:
In Section \ref{sec:PDE}, we consider the general parametrized elliptic Poisson problem and its regularity results both with respect to the spatial and with respect to the infinite-dimensional parametric component.  
In Section \ref{sec:infiniteapprox}, we review the setting of infinite dimensional tensor products of Hilbert spaces   and the associated approximation and $\varepsilon$-dimension. 
In Section \ref{solution joint regularity}, we give more details on the applications of the general setting to the smoothness spaces arising in parametric PDEs. 
The main mathematical results concern the cardinality of the infinite dimensional hyperbolic crosses in Section \ref{cardinality of HC}.
This section is split into two steps. The first result in \ref{condition for summability} addresses the inclusion $\Big(\frac{|\bs|_1!}{\bs!}\bb^\bs\Big)_{\bs \in \FF} \in \ell_p(\FF)$ with $0 < p < \infty$. This is in particular novel since the case $p>1$ is included here. The main result in this section is Theorem \ref{theorem[<E<]} which is proven based on a result in Section \ref{Estimates of cardinality of HC}.
Here, the summability condition enters an absolute constant. 
In Section \ref{finalrates},
we combine our results to derive sharp estimates 
 of the $\varepsilon$-dimension and its inverse, the Kolmogrov $n$-widths of the Sobolev-analytic-type function classes. These results are then applied to the Galerkin approximation of parametric elliptic PDEs. 
 We finish the paper with some concluding remarks in Section \ref{conclusion}.   

\bigskip
\noindent
{\bf Notation.}
We will use the following notation: $\ZZms := \{\bk \in \ZZm: k_j \not= 0, \ j =1,...,m\}$; $\RRi$ is the set of all sequences $\by= (y_j)_{j=1}^\infty$ with $y_j \in \RR$; $|\bk|_\infty := \max_{1 \le j \le m}|k_j|$ for $\bk \in \ZZm$.
Similarly, we set $\II=[-1,1]$ and $\IIi$ is the set of all sequences $\by= (y_j)_{j=1}^\infty$ with $y_j \in \II$.
$\ZZi$ is the set of all sequences $\bs= (s_j)_{j=1}^\infty$ with $s_j \in \ZZ$. 
Furthermore, 
$\ZZip := \{\bs \in \ZZi: s_j \ge 0, \ j =1,2,...\}$, $y_j$ is the $j$th coordinate of 
$\by \in \RR^\infty$. Moreover, $\FF$ is a subset of $\ZZip$ of all $\bs$ such that $\supp(\bs)$ is finite, where 
$\supp(\bs)$ is the support of $\bs$, that is the set of all $j \in \NN$ such that 
$s_j \not=0$.
If $\bs \in \FF$, we define
\[
\bs!:= \prod_{j=1}^\infty s_j!,  \quad |\bs|_1:= \sum_{j=1}^\infty s_j, \quad \text{and} \quad
\bb^\bs
:= \
\prod_{j=1}^\infty  b_j^{s_j}
\] 
for a sequence  $\bb=(b_j)_{j \in \NN}$ of positive numbers.

\section{Parametric Operator equations}
\label{sec:PDE}
Let us briefly recall the setting of \cite{KS16}. Denote by $X$ a real separable Banach space over the field $\RR$ and by $X^{\prime}$ its topological dual, i.e., the bounded linear functionals. We consider a map
\begin{align*}
\mathcal{G}: \left(\IIi,\left\|\cdot \right\|_{\infty} \right)\to \mathcal{L}_{I}(X,X^{\prime}), \quad \by \mapsto \mathcal{G}(\by)=G_{\by},
\end{align*}
where $\mathcal{L}_{I}$ denotes the space of boundedly invertible linear operators. By $G_{\by}^{-1}\in\mathcal{L}_{I}(X,X^{\prime})$, we denote the element such that $G_{\by} \circ G^{-1}_{\by}=\Id_{X^{\prime}}$ and $G^{-1}_{\by} \circ G_{\by}=\Id_{X}$.
We define 
\begin{align*}
\mathcal{G}_{-1}:\left(\IIi,\left\|\cdot \right\|_{\infty} \right) \to \mathcal{L}_{I}(X,X^{\prime}), \quad \by \mapsto \mathcal{G}_{-1}(\by)=G^{-1}_{\by}.
\end{align*}
We assume that $\mathcal{G}_{-1}$ is bounded by $C(\mathcal{G})$ i.e., that
\begin{align}\label{opbounded}
\sup_{\genfrac{}{}{0pt}{}{\by \in \IIi}{\|\by \|_{\infty}\le 1}} \left\|\mathcal{G}_{-1}(\by) \right\|_{\mathcal{L}(X,X^{\prime})}=\sup_{\by \in \IIi} \left\|G^{-1}_{\by} \right\|_{\mathcal{L}(X,X^{\prime})}\le C(\mathcal{G}).
\end{align} 
Moreover, we assume that $\mathcal{G}$ is analytic with respect to every $y_{j}$ with $j \in \NN$ and that there is a sequence $\bd:\NN \to \RR$ with $\bd \in \ell^{p}(\NN)$ for a fixed $0<p\le 1$ such that for all $\bs \in \FF\setminus \{0\}$
\begin{align}\label{anacond}
\sup_{\by \in \IIi} \left\|\mathcal{G}_{-1}(\mathbf{0}) \partial_{\by}^{\bs} \mathcal{G}(\by) \right\|_{\mathcal{L}(X,X^{\prime})} \le C(\mathcal{G}) \bd^{\bs} .
\end{align}
Furthermore, we observe that we can write the solution $u \in X$ of the operator equation $G(\by)u(\by)=f$ for given $f \in X^{\prime}$ in terms of the solution operator
\begin{align}\nonumber
\mathcal{S}:\IIi \times X^{\prime} \to X, \quad (\by,f) \mapsto \mathcal{S}(\by,f):=\mathcal{G}_{-1}(\by) f = G^{-1}(\by)f
\end{align}
and \cite[Thm.~4]{KS16} provides the bound
\begin{align}\label{solmapbound}
\sup_{\by \in \IIi}\sup_{\genfrac{}{}{0pt}{}{f \in X^{\prime}}{\|f\|_{X^{\prime}}=1}} \left\|\partial_{\by}^{\bs}\mathcal{S}(\by,f) \right\|_{X} \le \frac{C(\mathcal{G})}{\ln(2)}\left|\bs\right|! \bd^{\bs} 
\end{align}
for all $\bs \in \FF\setminus \{0\}$. This implies a (generalized) Taylor's series representation of
\begin{align}\nonumber
u(\by)=u_{f}(\by)= \sum_{s \in \FF} \frac{1}{\bs!} \partial_{\by}^{\bs}u_{f}(\by) \by^{\bs}=\sum_{s \in \FF} \left(\frac{1}{\bs!} \left.\partial_{\by}^{\bs}u_{f}(\by)\right|_{\by=0}\right)\by^{\bs}=\sum_{s \in \FF} \left(\frac{1}{\bs!} \left.\partial_{\by}^{\bs} \mathcal{S}(\by,f)\right|_{\by=0}\right)\by^{\bs}
\end{align} 
Hence, the coefficient are bounded by
\begin{align} \label{partial_yS}
\left|\frac{1}{\bs!} \left.\partial_{\by}^{\bs} \mathcal{S}(\by,f)\right|_{\by=0}\right| \le \frac{C(\mathcal{G})}{\ln(2)}\frac{\left|\bs\right|!}{\bs!} \bd^{\bs} \left\|f\right\|_{X^{\prime}},
\end{align}
which fits exactly into our framework, i.e., the upper bound has the structure of $\rho_{a,\bb}^{-1}(\bk,\bs)$ with $a=0$ from \eqref{introrho}. We will, however, study a more specific example in more detail, since we also need spatial regularity results, which allows also for $a>0$. 
For the elliptic PDEs \eqref{SPDE} formulated in the next section, some particular estimates for the coefficients in the Taylor and Legendre expansions which are similar to \eqref{solmapbound} and \eqref{partial_yS} were established in earlier papers \cite{BNTT,CDS10b,CDS10a}.

\subsection{Parametric elliptic PDEs}
Here, we consider a more specific problem which fits into the framework outlined above.
We chose $X=H_{0}^{1}(\IIm)$ and hence $X^{\prime}=H^{-1}(\IIm)$. The operator is 
\begin{align*}
\mathcal{G}_{a}:\IIi \to \mathcal{L}(H_{0}^{1}(\IIm),H^{-1}(\IIm)), \quad \by \mapsto \left(H_{0}^{1}(\IIm) \ni u \mapsto - \operatorname{div} (a(\by)\nabla u(\by)) \in H^{-1}(\IIm)\right),
\end{align*}
where $a:\IIi \times \IIm \to \RR_{+}$ is a function satisfying
\begin{equation} \nonumber
\quad 0 \ < \  r \ < \ a(\by,\bx)  \le \ R \ < \ \infty, \quad \bx \in \IIm, \ \by \in \IIi.
\end{equation}
In order to derive spatial regularity, we will restrict ourselves to $f \in L^{2}(\IIm) \subset H^{-1}(\IIm)$.
Moreover, we restrict ourselves to periodic problems, that is $a(\by)(\bx):=a(\bx,\by)$  is a function of $\bx=(x_1,...,x_m) \in \II^m$ and of parameters $\by=(y_1,y_2,...) \in \IIi$ on $\II^m \times \IIi$, and the function $f(\bx)$ is a function of $\bx=(x_1,...,x_m) \in \II^m$. We will assume that $a(\by)$ and $f$ as functions on $\bx$ can be extended to $1$-periodic functions in each variable $x_j$ on the whole $\RR^m$, and hence  $a(\by)$ and $f$ can be considered as functions defined om $\TTm$. 
Hence, we consider the parametric elliptic problem 
\begin{equation} \label{SPDE}
- \operatorname{div} (a(\by)\nabla_{x} u(\by))
\ = \
f \quad \text{in} \quad \II^m,
\quad u|_{\partial \II^m} \ = \ 0, \quad \by \in \IIi.
\end{equation}
Throughout the present paper we also preliminarily assume that $f \in H^{-1}(\IIm)$ and  the diffusions $a$ satisfy the {\em uniform ellipticity assumption} which ensures condition \eqref{opbounded}
\begin{equation} \nonumber
\quad 0 \ < \  r \ < \ a(\by)(\bx)=a(\bx,\by) \ \le \ R \ < \ \infty, \quad \bx \in \TTm, \ \by \in \IIi.
\end{equation}

Let $V:= H^1_0(\TTm)$ and  denote by $W$ the subspace of $V$ equipped with the semi-norm and norm
\begin{equation} \nonumber
|v|_W 
:= \
\|\Delta v\|_{L_2(\TTm)}, \quad 
\|v\|_W 
:= \
\|v\|_V + |v|_W.
\end{equation}

Note that if $v \in L_2(\TTm)$ and 
\begin{equation} \nonumber
v 
\ = \
\sum_{\bk \in \ZZm} v_\bk e_\bk,
\end{equation}
where $e_\bk(x):=  e^{i2\pi \bk x}$, i.e., $\{e_\bk\}_{\bk \in \ZZ^{m}}$ is the usual orthonormal basis of 
$L_2(\TT^{m})$,
then from the definition and Parseval's identity we have
\begin{equation} \label{Parseval-V}
(2\pi)^2\sum_{\bk \in \ZZms} |\bk|_\infty^2 |v_\bk|^2
\ \le \
\|v\|_V^2 = \sum_{i=1}^{m}\|\partial_{i} v\|^{2}_{L^{2}(\TT^{m})}=(2\pi)^2\sum_{\bk \in \ZZms} |\bk|_{2}^2 |v_\bk|^2
\ \le \
(2\pi)^{2}m\sum_{\bk \in \ZZms} |\bk|_\infty^2 |v_\bk|^2,
\end{equation}
and 
\begin{equation} \label{Parseval-W}
(2\pi)^{4}\sum_{\bk \in \ZZms} |\bk|_\infty^4 |v_\bk|^2
\ \le \
|v|_W^2 
\ \le \ (2\pi)^{4}m^{2}\sum_{\bk \in \ZZms} |\bk|_\infty^4 |v_\bk|^2,
\end{equation}
where we used the norm equivalence $|\bk|_{\infty}\le |\bk|_{2} \le \sqrt{m}|\bk|_{\infty}$ for all $\bk \in \ZZms$.
 
\subsection{Spatial regularity}\label{sec:spatialreg}
By the well-known Lax-Milgram lemma, there exists a unique (weak) solution $u \in V$ to equation \eqref{SPDE} which satisfies the variational equation
\begin{equation} \nonumber
\int_{\TTm} a(\bx,\by) \nabla u(\bx,\by) \cdot \nabla v(\bx) \, \mbox{d}\bx
\ = \
\int_{\TTm} f(\bx) \, v(\bx) \, \mbox{d}\bx, \quad \forall v \in V.
\end{equation}
We skip the explicit dependence on the parameter $\by$ in this section. 
 Moreover, this solution satisfies the inequality
\begin{equation} \nonumber
\|u\|_V 
\ \le \
\frac{\|f\|_{V^*}}{r},
\end{equation}
where $V^* = H^{-1}(\TTm)$ denotes the dual of $V$. 
Observe that there holds the embedding $L_2(\TTm) \hookrightarrow V^*$ and the  inequality
\begin{equation} \nonumber
\|f\|_{V^*} 
\ \le \
\|f\|_{L_2(\TTm)}.
\end{equation}

If we assume that $a \in \Wi$, then the solution $u$ of \eqref{SPDE} is in $W$. Moreover, $u$ satisfies the estimates
\begin{equation} \nonumber
|u|_W 
\ \le \
\frac{1}{r}\left(1 + \frac{|a|_{\Wi}}{r}\right) \|f\|_{L_2(\TTm)},
\end{equation}
and
\begin{equation} \nonumber
\|u\|_W 
\ \le \
\frac{1}{r}\left[1 + \left(1 + \frac{|a|_{\Wi}}{r}\right)\right] \|f\|_{L_2(\TTm)}.
\end{equation}
This spatial regularity implies certain approximation rate if we use trigonometric polynomials in a Galerkin approach. 
For a real positive number $T \geq 1$ we define 
the index set 
\begin{align*}
G_{\ZZms}(T):=\{\bk \in \ZZms: \ |\bk|_\infty \le T\}.
\end{align*}
Denote by $\Tt_n$ with $n=(2\lfloor T\rfloor)^{m}=|G_{\ZZms}(T)|$ the space of trigonometric polynomials
\begin{equation} \nonumber
\Tt_n:= \left\{ v : \sum_{\bk \in G_{\ZZms}(T)} v_\bk e_\bk \right\}
\end{equation}
of dimension $n$. Let $P_n$ be the projection from $L_2(\TTm)$ onto $\Tt_n$. Then, we get using $2^{-1}n^{1/m}=\lfloor T \rfloor \le T \le 2^{-1}n^{1/m}+1$ and $T\ge 1$ that
\begin{align*}
\|u - P_n(u)\|_V &\le 2\pi \left(m \sum_{\bk \in \ZZms \setminus G_{\ZZms}(T)} |\bk|_\infty^2 |v_\bk|^2\right)^{\frac{1}{2}}  \le 2\pi \sqrt{m} \left( \sum_{\bk \in \ZZms \setminus G_{\ZZms}(T)} T^{-2}|\bk|_\infty^4 |v_\bk|^2\right)^{\frac{1}{2}}\\&\le 2\pi \sqrt{m} T^{-1}|u|_W \le  4\pi \sqrt{m} n^{-\frac{1}{m}}|u|_W
\end{align*}
holds for all $u \in W$. Furthermore, we obtain $n_{\varepsilon}(V,W) \lesssim G_{\ZZms}(\varepsilon^{-1})$ for $0<\varepsilon\le 1$.
Let $u_n$ be the Galerkin approximation, i.e., the unique solution of the problem
\begin{equation} \nonumber
\int_{\TTm} a(\bx,\by) \nabla u_n(\bx,\by) \cdot \nabla v(\bx) \, \mbox{d}\bx
\ = \
\int_{\TTm} f(\bx) \, v(\bx) \, \mbox{d}\bx, \quad \forall v \in \Tt_n.
\end{equation}
Then, we get with \eqref{Parseval-V}, \eqref{Parseval-W}, and with C\'ea's lemma that
\begin{equation} \nonumber
\|u - u_n\|_V
\ \le \ 
\sqrt{\frac{R}{r}} \, \inf_{v \in \Tt_n}\|u - v\|_V
\ = \ 
\sqrt{\frac{R}{r}}\, \|u - P_n(u)\|_V
\ \le \ 
\sqrt{\frac{R}{r}}\, 4\pi \sqrt{m} n^{-1/m}|u|_W
\ \le \ 
C \, n^{-1/m},
\end{equation}
where we can explicitly compute the constant to be
\begin{equation} \nonumber
C
:= \
2 \sqrt{\pi}\sqrt{\frac{m R}{r}}\,\frac{1}{r}\left(1 + \frac{|a|_{\Wi}}{r}\right)\|f\|_{L_2(\TTm)}.
\end{equation}

\subsection{Parametric regularity}\label{sec:paramreg}

A probability measure on $\IIi$  is the
infinite tensor product measure $\mu$ of the univariate uniform probability measures on the one-dimensional $\II$, i.e.
\[
\mathrm{d} \mu(\by) 
\ = \ 
\bigotimes_{j \in \ZZ} \frac{1}{2}\mathrm{d} y_j.
\]
Here, the sigma algebra $\Sigma$ for $\mu$ is generated by the finite rectangles
$ 
\prod_{j \in \NN} I_j,
$
where only
a finite number of the $I_j$ are different from $\II$ and those that are different are intervals
contained in $\II$. Then, $(\IIi, \Sigma,  \mu)$ is a probability space.

Now, let $L_2(\IIi, \mu)$ denote the Hilbert space of functions on $\IIi$ equipped
with the inner product
\[
\langle v, w \rangle 
:= \
\int_{\IIi} v(\by) \overline{w(\by)} \, \mathrm{d} \mu(\by).
\]
The norm in $L_2(\IIi, \mu)$ is defined as $\|v\| := \langle v,v \rangle^{1/2}$. In what follows, 
$\mu$ is fixed, and, for convention, we write $L_2(\IIi, \mu):= L_2(\IIi)$. Furthermore, let $L_2(\TTm)$ be the usual Hilbert space of Lebesgue square-integrable functions on $\TTm$ based on the univariate 
normed Lebesgue measure. 
Then, we define 
\begin{equation}\nonumber
L_2(\TTm \times \IIi)
:= \
L_2(\TTm) \otimes L_2(\IIi).
\end{equation}
The space $L_2(\TTm \times \II^s) = L_2(\TTm) \otimes L_2(\II^s)$ can be considered as a subspace of 
$L_2(\TTm \times \IIi)$.

Let us reformulate  the parametric equation \eqref{SPDE} in the variational form. 
 For every $\by \in \IIi$, by the well-known Lax-Milgram lemma, there exists a unique solution 
$u(\by) \in V$ in weak form which satisfies the variational equation
\begin{equation} \nonumber
\int_{\TTm} a(\bx,\by)\nabla u(\by)(\bx) \cdot \nabla v(\bx) \, \mbox{d}\bx
\ = \
\int_{\TTm} f(\bx) \, v(\bx) \, \mbox{d}\bx, \quad \forall v \in V.
\end{equation}
Moreover, $u(\by)$ satisfies the estimate
\begin{equation} \nonumber
\|u(\by)\|_V 
\ \le \
\frac{\|f\|_{V^*}}{r}, \ \forall \by \in \IIi.
\end{equation}
Therefore, from the inclusions $u \in L_\infty(\IIi,V) \subset L_2(\IIi,V,\mu)$ it follows that 
$u$ admits the unique expansions 
\begin{equation} \label{Legendre-series}
u
 = \
\sum_{\bs \in \FF} u_\bs \, L_\bs,
\end{equation} 
where 
$\{L_s\}_{s=0}^\infty$ are the family of univariate orthonormal Legendre polynomials in 
$L_2(\II)$ and 
\[
L_\bs(\by):= \ \prod_{j \in \supp(\bs)} L_{s_j}(y_j).
\]
The expansion \eqref{Legendre-series} for $u$ converges in $L_2(\IIi,V,\mu)$, where the Legendre coefficients 
$u_\bs \in V$ are defined by
\[
u_\bs
:= \
\langle u, L_\bs \rangle 
:= \
\int_{\IIi}u(\by) L_{\bs}(\by) \, \mathrm{d} \mu(\by) \quad  
\quad \bs \in \FF.
\]

From \cite[Theorem 2.1]{BNTT} (or from the more general bound \eqref{solmapbound} for the parametric elliptic PDEs \eqref{SPDE}) and the formulas for the Legendre coefficients 
\begin{equation} \nonumber
u_\bs
\ = \
\frac{1}{\bs!} \, \prod_{j: \ s_j \not=0}\frac{2s_j + 1}{2^{s_j}}
\int_{\IIi} \partial^\bs_\by u(\by) \prod_{j: \ s_j \not=0} (1 - y_j^2)^{s_j} \mbox{d} \mu(\by)
\end{equation}
we derive the following result.

\begin{lemma} \label{lemma|u_bs|_V}
Assume that the diffusions  $a$ are infinitely times differentiable  with respect to $\by$ and that there exists a positive sequence $\ba = \ (a_j)_{j \in \NN}$ such that  
\begin{equation} \nonumber
\|\partial^\bs_\by a(\by)\|_V
\ \le \
\ba^\bs
 \quad  \by \in \IIi, \quad \bs \in \FF.
\end{equation}  
Then we have 
\begin{equation} \nonumber
\|u_\bs\|_V 
\ \le \
K \frac{|\bs|!}{\bs!}\, \bd^\bs, \quad \bs \in \FF,
\end{equation}
where $K:= \ \frac{\|f\|_{V'}}{r}$ and $\bd := \ \frac{\ba}{\ln 2}$.
\end{lemma}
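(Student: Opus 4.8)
The plan is to combine the uniform estimate on the parametric derivatives $\partial_\by^\bs u$ with the exact integral formula for the Legendre coefficients $u_\bs$ displayed just above the statement. As a first step I would verify that the hypotheses behind \eqref{solmapbound} (equivalently \cite[Theorem 2.1]{BNTT}) hold for $\mathcal{G}_a$ with the sequence $\ba$: since $\partial_\by^\bs \mathcal{G}_a(\by)v = -\operatorname{div}(\partial_\by^\bs a(\by)\,\nabla v)$ and $\|\mathcal{G}_{-1}(\mathbf{0})\|_{\mathcal{L}(V,V')}\le 1/r$, the assumption $\|\partial_\by^\bs a(\by)\|_V \le \ba^\bs$ is exactly what is needed to realize the analyticity condition \eqref{anacond} with the sequence $\bd=\ba$ there. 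Plugging this into \eqref{solmapbound} gives $\sup_{\by\in\IIi}\|\partial_\by^\bs u(\by)\|_V \le \tfrac{\|f\|_{V'}}{r\,\ln 2}\,|\bs|!\,\ba^\bs$ for $\bs\in\FF\setminus\{0\}$. Since $\ln 2<1$, the scalar factor $1/\ln 2$ is dominated by $(\ln 2)^{-|\bs|}$ whenever $|\bs|\ge 1$, so this is at most $K\,|\bs|!\,\bd^\bs$ with the lemma's constants $K=\|f\|_{V'}/r$ and $\bd=\ba/\ln 2$. The degenerate case $\bs=\mathbf{0}$ is handled separately by the energy estimate $\sup_{\by}\|u(\by)\|_V\le\|f\|_{V'}/r=K$.

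Next I would substitute this into the Legendre-coefficient formula, take the $V$-norm, and move it inside the Bochner integral. Bounding $\|\partial_\by^\bs u(\by)\|_V$ by its supremum over $\by$ and using that $\mu$ is a product measure, one is left with
\[
\|u_\bs\|_V \ \le \ \frac{1}{\bs!}\Big(\sup_{\by}\|\partial_\by^\bs u(\by)\|_V\Big)\prod_{j\in\supp(\bs)}\left(\frac{2s_j+1}{2^{s_j}}\cdot\frac{1}{2}\int_{-1}^{1}(1-y^2)^{s_j}\,\mathrm{d}y\right),
\]
so everything reduces to showing that the displayed product of one-dimensional weight factors is at most $1$.

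The core computation is the evaluation of a single factor. Using the Beta integral $\tfrac12\int_{-1}^{1}(1-y^2)^{s}\,\mathrm{d}y = \tfrac{2\cdot 4^{s}\,s!\,(s+1)!}{(2s+2)!}$, a short simplification gives $\frac{2s+1}{2^{s}}\cdot\frac12\int_{-1}^{1}(1-y^2)^{s}\,\mathrm{d}y = \frac{2^{s}}{\binom{2s}{s}}\le 1$, where the inequality follows from $\binom{2s}{s}=\prod_{k=1}^{s}\frac{s+k}{k}\ge 2^{s}$. Hence the product over $j\in\supp(\bs)$ is bounded by $1$, and combining with the derivative bound yields $\|u_\bs\|_V\le \frac{1}{\bs!}\,K\,|\bs|!\,\bd^\bs = K\frac{|\bs|!}{\bs!}\bd^\bs$, as claimed. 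The only genuinely delicate points are this weight normalization, where one must not lose the factor $\le 1$, and the bookkeeping of the $\ln 2$ constant so that it is absorbed cleanly into $\bd=\ba/\ln 2$; I expect the weight evaluation to be the main technical step, with the remainder being assembly of the cited estimates.
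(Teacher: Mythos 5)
Your proposal follows exactly the route the paper itself indicates: the paper obtains the lemma by combining the parametric derivative bound of \cite[Theorem~2.1]{BNTT} (equivalently \eqref{solmapbound} with $C(\mathcal{G})=1/r$) with the displayed integral formula for the Legendre coefficients, and your filled-in details --- the weight evaluation $\frac{2s+1}{2^{s}}\cdot\frac12\int_{-1}^{1}(1-y^2)^{s}\,\mathrm{d}y=2^{s}/\binom{2s}{s}\le 1$, the absorption of $1/\ln 2\le(\ln 2)^{-|\bs|_1}$ into $\bd=\ba/\ln 2$, and the separate treatment of $\bs=\mathbf{0}$ --- are all correct. The only caveat is your assertion that $\|\partial^\bs_\by a(\by)\|_V\le\ba^\bs$ is ``exactly what is needed'' for \eqref{anacond}: bounding the operator $v\mapsto-\operatorname{div}(\partial^\bs_\by a\,\nabla v)$ in $\mathcal{L}(H^1_0,H^{-1})$ requires an $L_\infty$ bound on $\partial^\bs_\by a$, which the $H^1$-norm does not control when $m\ge 2$; this imprecision, however, is already present in the lemma's hypothesis as stated (the cited references assume an $L_\infty$- or $W^1_\infty$-type bound), so it is not a defect of your argument relative to the paper's.
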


Now, denote by $\Wi$ the space of functions $v$ on $\TTm$, equipped with the semi-norm and the norm 
\[ 
|v|_{\Wi}
:= \
\max_{1 \le i \le m} \|\partial_{x_i} v\|_{\LiD}, \quad 
\|v\|_{\Wi}
:= \
\|v\|_{\LiD} + |v|_{\Wi}
\]
respectively.
For the proof of the following lemma see \cite[Lemma 5.5]{Di16}.

\begin{lemma} \label{lemma|u_bs|_W}
Assume that  $f \in L_2(\II^m)$, assume that the diffusions  $a \in L_\infty(\IIi,\Wi)$ and that they are affinely dependent with respect to $\by$ as 
\begin{equation} \label{KL-exp}
a(\by)(\bx)
\ = \
\overline{a}(x) \ + \ 
\sum_{j=1}^\infty  y_j\, \psi_j(\bx), \quad \bx \in \TTm, \ y \in \IIi, \quad \overline{a},\psi_j \in \Wi.
\end{equation}
Then we have that
\begin{equation}\nonumber
\|u_\bs\|_W 
\ \le \
K \frac{|\bs|!}{\bs!}\, \bd^\bs, \quad \bs \in \FF,
\end{equation}
where 
\begin{equation} \nonumber
K:= \
\frac{1}{r}\left(1 + \left(1 + \frac{|a|_{L_\infty(\IIi,\Wi)}}{r}\right)\right) \|f\|_{L_2(\TTm)}
\end{equation}
and
\begin{equation} \nonumber
\bd \ = \ (d_j)_{j \in \NN}, \quad
d_j 
:= \
\frac{1}{r\sqrt{3}} \left(\left(\frac{|a|_{L_\infty(\IIi,\Wi)}}{r} 
+ 2\right)\|\psi_j\|_{L_\infty(\TTm)} + |\psi_j|_{\Wi}\right).
\end{equation}
\end{lemma}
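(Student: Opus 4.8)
The plan is to reduce the bound on the Legendre coefficients $u_\bs$ to a uniform-in-$\by$ estimate on the mixed parametric derivatives $\partial^\bs_\by u(\by)$ measured in the spatial norm $\|\cdot\|_W$, and then to substitute this into the Legendre coefficient formula recalled just before the lemma. First I would exploit the affine dependence \eqref{KL-exp}: since $a(\by)$ is linear in each $y_j$, one has $\partial_{y_j} a = \psi_j$ and $\partial^\bt_\by a = 0$ whenever $|\bt|_1 \ge 2$. Differentiating the variational equation $\int_{\TTm} a(\by)\nabla u(\by)\cdot\nabla v = \int_{\TTm} f v$ a total of $\bs$ times in $\by$ and invoking the Leibniz rule therefore collapses to the two-term identity
\begin{equation*}
\int_{\TTm} a(\by)\,\nabla \partial^\bs_\by u(\by)\cdot\nabla v \, \mathrm{d}\bx
\ = \
-\sum_{j\in\supp(\bs)} s_j \int_{\TTm} \psi_j\,\nabla \partial^{\bs-\mathbf{e}_j}_\by u(\by)\cdot\nabla v \, \mathrm{d}\bx ,
\end{equation*}
valid for all $v\in V$ and all $\bs\neq 0$, where $\mathbf{e}_j$ is the $j$-th coordinate vector. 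Thus $w_\bs := \partial^\bs_\by u(\by)$ solves the elliptic problem with the same diffusion $a(\by)$ but with the divergence-form right-hand side $\sum_{j\in\supp(\bs)} s_j\,\operatorname{div}(\psi_j\nabla w_{\bs-\mathbf{e}_j})$.

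Next I would promote this to the $W$-norm. The key point is that, because $\psi_j\in\Wi$, the right-hand side is in fact an $L_2$ datum: expanding $\operatorname{div}(\psi_j\nabla w_{\bs-\mathbf{e}_j}) = \nabla\psi_j\cdot\nabla w_{\bs-\mathbf{e}_j} + \psi_j\,\Delta w_{\bs-\mathbf{e}_j}$ and using $\|\nabla w_{\bs-\mathbf{e}_j}\|_{L_2(\TTm)}\le\|w_{\bs-\mathbf{e}_j}\|_V$ together with $\|\Delta w_{\bs-\mathbf{e}_j}\|_{L_2(\TTm)} = |w_{\bs-\mathbf{e}_j}|_W$, one bounds $\|g_\bs\|_{L_2(\TTm)}$ by $\sum_{j} s_j\big(\|\psi_j\|_{\LiD}+|\psi_j|_{\Wi}\big)\|w_{\bs-\mathbf{e}_j}\|_W$, keeping the two spatial contributions separate so as to reproduce the exact shape of $d_j$. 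Applying the spatial regularity estimates of Section \ref{sec:spatialreg} to the problem solved by $w_\bs$ (with the $\by$-uniform constant obtained from $|a(\by)|_{\Wi}\le|a|_{L_\infty(\IIi,\Wi)}$) then yields a recursive inequality $\|w_\bs\|_W \le c\sum_{j\in\supp(\bs)} s_j\,\delta_j\,\|w_{\bs-\mathbf{e}_j}\|_W$, with $c$ absorbing $\tfrac1r(1+|a|_{L_\infty(\IIi,\Wi)}/r)$ and $\delta_j$ proportional to the $\psi_j$-terms; the base case is $\|w_0\|_W = \|u(\by)\|_W$, controlled by the non-parametric estimate of Section \ref{sec:spatialreg}.

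It remains to solve the recursion and pass to the Legendre coefficients. The ansatz $\|w_\bs\|_W\le \|w_0\|_W\,|\bs|!\,\prod_j(c\,\delta_j)^{s_j}$ satisfies the recursion exactly, since $c\sum_{j}s_j\delta_j\,(|\bs|-1)!\,\prod_i(c\delta_i)^{(\bs-\mathbf{e}_j)_i} = |\bs|!\,\prod_i(c\delta_i)^{s_i}$; this gives the desired uniform-in-$\by$ bound $\|\partial^\bs_\by u(\by)\|_W \le \|w_0\|_W\,|\bs|!\,\prod_j(c\delta_j)^{s_j}$. Inserting this into the Legendre coefficient formula and taking $W$-norms, each coordinate contributes the scalar integral $\tfrac{2s_j+1}{2^{s_j}}\cdot\tfrac12\int_{-1}^{1}(1-t^2)^{s_j}\,\mathrm{d}t$, whose explicit value turns the prefactor $|\bs|!$ together with the $\tfrac1{\bs!}$ in front of the formula into the claimed $\tfrac{|\bs|!}{\bs!}$ and supplies the numerical constant (the $1/\sqrt3$ visible in $d_j$). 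Collecting the constants reproduces $K$ and $\bd$ in their stated form.

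I expect the main obstacle to be the second step, namely closing the recursion in the $W$-norm rather than merely in $V$. In the $V$-estimate of Lemma \ref{lemma|u_bs|_V} one simply tests with $v=w_\bs$ and uses coercivity; here the $H^2$-type bound forces one to rewrite the divergence-form source as a genuine $L_2$ function, which is exactly where the stronger hypotheses $\psi_j\in\Wi$ and $a\in L_\infty(\IIi,\Wi)$ are consumed. One must also check that the constants telescope so that the recursion retains the pure form $\sum_j s_j\delta_j\|w_{\bs-\mathbf{e}_j}\|_W$, since it is this structure that makes the $|\bs|!$-ansatz close and ultimately yields a constant $\bd$ that is product- and order-independent.
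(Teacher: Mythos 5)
The paper does not actually prove this lemma: it is quoted from the reference [Di16, Lemma~5.5] (``For the proof of the following lemma see \cite[Lemma 5.5]{Di16}''), so there is no in-paper argument to compare against. That said, your strategy --- differentiate the variational form, use the affine structure so that only first-order Leibniz terms survive, close a recursion for $\sup_{\by}\|\partial^\bs_\by u(\by)\|_W$ with the $|\bs|!$-ansatz, and then insert the uniform bound into the Legendre coefficient formula --- is exactly the standard route taken in the cited reference and in \cite{BNTT}, and the recursion does close in the pure form you want: combining the $V$-estimate $\|w_\bs\|_V \le \tfrac1r\sum_j s_j\|\psi_j\|_{L_\infty}\|w_{\bs-\mathbf e_j}\|_V$ with the pointwise identity $-a\Delta w_\bs=\nabla a\cdot\nabla w_\bs+\sum_j s_j(\nabla\psi_j\cdot\nabla w_{\bs-\mathbf e_j}+\psi_j\Delta w_{\bs-\mathbf e_j})$ yields precisely the weights $\bigl(\tfrac{|a|_{L_\infty(\IIi,\Wi)}}{r}+2\bigr)\|\psi_j\|_{L_\infty}+|\psi_j|_{\Wi}$ appearing in $d_j$, with base case $\|w_0\|_W\le K$ from Section~\ref{sec:spatialreg}.

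The one step of your sketch that does not deliver the stated constant is the final one. Taking the coefficient formula literally as printed, with the factor $\tfrac{2s_j+1}{2^{s_j}}$, each coordinate contributes
\begin{equation*}
\frac{2s+1}{2^{s}}\cdot\frac12\int_{-1}^{1}(1-t^2)^{s}\,\mathrm dt
=\frac{2^{s}(s!)^2}{(2s)!},
\end{equation*}
which equals $1$ at $s=1$ and is \emph{not} bounded by $3^{-s/2}$; so this computation only proves the lemma with $d_j$ replaced by $\sqrt3\,d_j$. The $1/\sqrt3$ comes from the orthonormal normalization $L_s=\sqrt{2s+1}\,P_s$, for which Rodrigues' formula gives the per-coordinate factor
\begin{equation*}
\frac{\sqrt{2s+1}\,2^{s}(s!)^2}{(2s+1)!}\ \le\ 3^{-s/2},\qquad s\ge1,
\end{equation*}
with equality at $s=1$. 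You should carry out the last step with this normalization (the displayed formula in the paper appears to have $2s_j+1$ where $\sqrt{2s_j+1}$ is meant); otherwise your argument is sound and coincides with the one in the cited source.
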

The affine structure in \eqref{KL-exp} makes it easy to check the condition \eqref{anacond}. 
Furthermore, see \cite[Section 2.3]{SG} for more details where the setting of general operator equations includes parametric elliptic PDEs as special case.

We will see in Section \ref{solution joint regularity} that the spatial and parametric regularities of the solution $u$ to \eqref{SPDE} induce a joint regularity in infinite tensor product Hilbert spaces which is appropriate to hyperbolic cross approximation in infinite dimension.

\section{Approximation in infinite tensor product Hilbert spaces of joint regularity}
\label{sec:infiniteapprox}

In this section, we recall some results on approximation in infinite tensor product Hilbert spaces of joint regularity
which were proven in \cite[Subsection 3.1]{DG16}.
We first introduce the notion of the infinite tensor product of separable Hilbert spaces. 
Let $H_j$, $j=1,...,m$, be separable Hilbert spaces with inner products $\langle \cdot,\cdot\rangle_j$. 
First, we define the finite-dimensional tensor product of $H_j$, $j=1,...,m$, as the tensor vector space 
$H_1 \otimes H_2 \otimes \cdots \otimes H_m$ equipped with the inner product
\begin{equation} \label{tensorporoduct-innerporducts}
 \langle\otimes_{j=1}^m\phi_j,\otimes_{j=1}^m\psi_j\rangle 
:= \ 
\prod_{j=1}^m\langle\phi_j,\psi_j\rangle_j \, \quad 
\mbox{for all} \ \phi_j,\psi_j \in H_j. 
\end{equation}
By taking the completion under this inner product, the resulting Hilbert space is defined as the tensor product space $H_1 \otimes H_2 \otimes \cdots \otimes H_m$  of $H_j$, $j=1,...,m$.
Next, we consider the infinite-dimensional case.
If $H_j, j \in \NN$, is a collection of separable Hilbert spaces and $\xi_j, j \in \NN$, is a collection of unit vectors in these Hilbert spaces then the infinite tensor product $\otimes_{j \in \NN} H_j$  is the completion of the set of all finite linear combinations of simple tensor vectors $\otimes_{j \in \NN} \phi_j$ where all but finitely many of the $\phi_j$'s are equal to the corresponding $\xi_j$. 
The inner product of 
$\otimes_{j \in \NN} \phi_j$ and $\otimes_{j \in \NN} \psi_j$ is defined as in \eqref{tensorporoduct-innerporducts}.
For details on infinite tensor product of Hilbert spaces, see \cite{BR02}. 

Now, we will need a tensor product of Hilbert spaces of a special structure. 
Let $H_1$ and $H_2$ be two given infinite-dimensional separable Hilbert spaces. Consider the infinite tensor product Hilbert space
\begin{equation}\nonumber
 \Ll 
:= \ 
H_1^m \otimes H_2^\infty 
\quad \quad \mbox{ where } \quad \quad
H_1^m
:= \
\otimes_{j=1}^m H_1, \quad  
H_2^\infty
:= \ \otimes_{j=1}^\infty H_2. 
\end{equation}
In the following, we use the letters $I,J$ to denote either 
$\ZZ_+$ or $\ZZ$.  Recall also that we use the letter $\Ii$ to denote either $\ZZmp$ or $\ZZm$ and the letter $\Jj$ to denote either $\FF$  or $\ZZis$.
Let $\{\phi_{1,k}\}_{k \in I}$ and $\{\phi_{2,s}\}_{s \in J}$ be given orthonormal bases of $H_1$ and $H_2$, respectively. 
Then,  $\{\phi_{1,\bk}\}_{\bk \in \Ii}$ and $\{\phi_{2,\bs}\}_{\bs \in \Jj}$ are orthonormal bases of $H_1^m$ and $H_2^\infty$, respectively, 
where
\begin{equation} \nonumber
\phi_{1,\bk}
:= \ 
 \otimes_{j=1}^m \phi_{1,k_j}, \quad
\phi_{2,\bs} 
:= \ 
\otimes_{j=1}^\infty \phi_{2,s_j}.
\end{equation}
Moreover, the set $\{\phi_{\bk,\bs}\}_{(\bk,\bs) \in \Ii \times \Jj}$ is an orthonormal basis of $\Ll$, where
\begin{equation} \nonumber
 \phi_{\bk,\bs} 
:= \ 
 \phi_{1,\bk}  \otimes \phi_{2,\bs}.
\end{equation}
Thus, every $v \in \Ll$ can by represented by the series
\begin{equation} \nonumber
v
\ = \ 
\sum_{(\bk,\bs) \in \Ii \times \Jj} v_{\bk,\bs}\,  \phi_{\bk,\bs},
\end{equation}
where 
\begin{equation}\nonumber
v_{\bk,\bs}:=\langle v,\phi_{\bk,\bs} \rangle_{\Ll} =\left( \left(v,\phi_{1,\bk}\right)_{H_1^m},\phi_{2,\bs} \right)_{H_2^\infty}=\left( \left(v,\phi_{2,\bs}\right)_{H_2^\infty},\phi_{1,\bk} \right)_{H_1^m}
\end{equation}
is the $(\bk,\bs)$th coefficient of $v$ with respect to the orthonormal basis $\{\phi_{\bk,\bs}\}_{(\bk,\bs) \in \Ii \times \Jj}$. 
Furthermore, there holds Parseval's identity
\begin{equation} \nonumber
\|v\|_\Ll^2 
\ = \ 
\sum_{(\bk,\bs) \in \Ii \times \Jj} |v_{\bk,\bs}|^2.
\end{equation}

Now let us assume that a general sequence of scalars 
$\lambda := \{\lambda(\bk,\bs)\}_{(\bk,\bs) \in \Ii \times \Jj}$ with 
$\lambda(\bk,\bs) \not= 0$ is given.
Then, we define the associated space
\begin{equation}\label{Llambda}
\Ll^\lambda:=\left\{v \in \Ll \ : \ \text{there exists } \rep{v} \in \Ll \text{ such that } v := \sum_{(\bk,\bs) \in \Ii \times \Jj} \frac{\rep{v}_{\bk,\bs}}{\lambda(\bk,\bs)} \, \phi_{\bk,\bs}\right\}
\end{equation}
 
The norm of $\Ll^\lambda$ is defined by
\begin{equation}\label{P-Id[norm-Ll^lambda]}
\|v\|^{2}_{\Ll^\lambda} \ := \|\rep{v}\|^{2}_\Ll=\sum_{(\bk,\bs) \in \Ii \times \Jj} |\lambda(\bk,\bs)|^2 \,  |v_{\bk,\bs}|^2,
\end{equation}
where the last equality stems from Parseval's identity.

Define $\Jj_s:= \{\, \bs \in \Jj: \ \supp(\bs) \subset \{1, \cdots, s\}\, \}$.
We consider 
\begin{equation}\label{[Ll_s]}
\Ll_s
:= 
\left\{v 
 =  
\sum_{(\bk,\bs) \in \Ii \times \Jj_s} v_{\bk,\bs}\,  \phi_{\bk,\bs} \right\}
\quad \text{and} \quad 
\Ll^\lambda_s
:= \
\Ll^\lambda \cap \Ll_s.
\end{equation}
Next, let us assume that the general nonzero sequences of scalars 
$\lambda := \{\lambda(\bk,\bs)\}_{(\bk,\bs) \in \Ii \times \Jj}$ and 
$\nu := \{\nu(\bk,\bs)\}_{(\bk,\bs) \in \Ii \times \Jj}$ are given with associated spaces $\Ll^\lambda$ and $\Ll^\nu$ with corresponding norms and subspaces $\Ll^\lambda_s$ and $\Ll^\nu_\bs$, c.f. \eqref{[Ll_s]}.
As in Section \ref{sec:spatialreg}, we define for $T \ge 1$ the index-set 
\begin{equation} \nonumber
G_{\Ii \times \Jj}(T) 
:= \ 
\big\{(\bk,\bs)  \in \Ii \times \Jj: \, \frac{\lambda(\bk,\bs)}{\nu(\bk,\bs)}   \leq T\big\},
\end{equation}
which induces a subspace  
\begin{equation}\nonumber
\Pp(T):=\left\{g \in \Ll \ : \  v
\ = \ 
\sum_{(\bk,\bs) \in G_{\Ii \times \Jj}(T)}  \, v_{\bk,\bs}\,  \phi_{\bk,\bs}\right\} \subset \Ll.
\end{equation}
We are interested in the $\Ll^\nu$-norm approximation of elements from $\Ll^\lambda$  by elements from $\Pp(T)$. 
To this end, for $v \in \Ll$ and $T \ge 0$, we define the  operator $\Ss_T$ as
\begin{equation} \nonumber
\Ss_T(v)
:= \
\sum_{(\bk,\bs) \in G_{\Ii \times \Jj}(T)}  \, v_{\bk,\bs}\,  \phi_{\bk,\bs}.
\end{equation}
We make the assumption throughout this section that $G_{\Ii \times \Jj}(T)$ is a finite set for every $T \ge 1$.
Obviously, $\Ss_T$ is the orthogonal projection onto $\Pp(T)$. 
Furthermore, we define the set $G_{\Ii \times \Jj_s}(T)$, the subspace $\Pp_s(T)$
and the operator $\Ss_{s,T}(v)$ in the same way by replacing $\Jj$ by $\Jj_s$.

The following lemma gives an upper bound for the error of the orthogonal projection $\Ss_T$ with respect to the parameter $T$.
\begin{lemma} \nonumber
For arbitrary $T \geq 1$, we have
\begin{equation} 
\|v - \Ss_T(v)\|_{\nonumber\Ll^\nu}
\ \le  \
T^{-1}\|v\|_{\Ll^\lambda} \, , \qquad  \forall v \in \Ll^\lambda \cap  \Ll^\nu.
\end{equation}
\end{lemma}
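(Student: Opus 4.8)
The plan is to reduce everything to the weighted Parseval identity \eqref{P-Id[norm-Ll^lambda]} and then exploit the defining inequality of the index set $G_{\Ii \times \Jj}(T)$ coefficient by coefficient. Since $\Ss_T$ is the orthogonal projection onto $\Pp(T)$, the residual $v - \Ss_T(v)$ is exactly the tail
\begin{equation}\nonumber
v - \Ss_T(v) = \sum_{(\bk,\bs) \notin G_{\Ii \times \Jj}(T)} v_{\bk,\bs}\, \phi_{\bk,\bs},
\end{equation}
so that, applying the definition of the $\Ll^\nu$-norm to this expansion,
\begin{equation}\nonumber
\|v - \Ss_T(v)\|^2_{\Ll^\nu} = \sum_{(\bk,\bs) \notin G_{\Ii \times \Jj}(T)} |\nu(\bk,\bs)|^2\, |v_{\bk,\bs}|^2.
\end{equation}

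Next I would use the crucial observation that for every index in the complement one has $\lambda(\bk,\bs)/\nu(\bk,\bs) > T$, equivalently $|\nu(\bk,\bs)|^2 \le T^{-2}|\lambda(\bk,\bs)|^2$. Substituting this bound into each summand gives
\begin{equation}\nonumber
\|v - \Ss_T(v)\|^2_{\Ll^\nu} \le T^{-2} \sum_{(\bk,\bs) \notin G_{\Ii \times \Jj}(T)} |\lambda(\bk,\bs)|^2\, |v_{\bk,\bs}|^2.
\end{equation}
Finally I would enlarge the range of summation to all of $\Ii \times \Jj$, which only adds nonnegative terms, recognise the resulting sum as $\|v\|^2_{\Ll^\lambda}$ by \eqref{P-Id[norm-Ll^lambda]}, and take square roots to obtain the claimed estimate $\|v - \Ss_T(v)\|_{\Ll^\nu} \le T^{-1}\|v\|_{\Ll^\lambda}$. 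The hypothesis $v \in \Ll^\lambda \cap \Ll^\nu$ guarantees that both weighted norms are finite, so every step is legitimate.

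The argument is essentially elementary once the right identity is in place; the only point requiring a little care is the conversion of the scalar inequality $\lambda/\nu > T$ on the complement into the squared-modulus inequality $|\nu|^2 \le T^{-2}|\lambda|^2$, which relies on the weights being positive, or at least on $\lambda/\nu$ being a well-defined positive ratio. No genuine obstacle arises, and the bound is in fact sharp, the worst case being attained by coefficients concentrated on indices just outside $G_{\Ii \times \Jj}(T)$.
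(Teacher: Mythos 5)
Your proof is correct and is exactly the standard argument: the paper itself states this lemma without proof (citing Subsection 3.1 of \cite{DG16}), and the coefficientwise comparison of $|\nu(\bk,\bs)|^2$ with $T^{-2}|\lambda(\bk,\bs)|^2$ on the complement of $G_{\Ii\times\Jj}(T)$, followed by enlarging the summation range, is precisely the argument used there. Your remark about needing $\lambda/\nu$ to be a positive ratio is the right caveat, and it is implicit in the paper's definition of $G_{\Ii\times\Jj}(T)$ via the inequality $\lambda(\bk,\bs)/\nu(\bk,\bs)\le T$.
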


Recall that $\Uu^\lambda$ is the unit ball in $\Ll^\lambda$, i.e., 
$\Uu^\lambda := \ \{v \in \Ll^\lambda: \|v\|_{\Ll^\lambda} \le 1\},
$
and denote by $\Uu^\lambda_s$ the unit ball in $\Ll^\lambda_s$, i.e., 
$
\Uu^\lambda_s := \ \{v \in \Ll^\lambda_s: \|v\|_{\Ll^\lambda_s} \le 1\}.
$ We then have the following corollary.
\begin{corollary} \label{corollary[|f - S_T(f)|]}
For arbitrary $T \geq 1$, 
\begin{equation} \nonumber
\sup_{v \in \Uu^\lambda} \ \inf_{w \in \Pp(T)} \|v - w\|_{\Ll^\nu}
\ =  \
\sup_{v \in \Uu^\lambda} \|v - \Ss_T(f)\|_{\Ll^\nu}
\ \le  \
T^{-1}. 
\end{equation}
\end{corollary}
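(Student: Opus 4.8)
The plan is to deduce the corollary almost directly from the preceding lemma, with the only genuine work being the verification of the claimed equality. First I would argue that, because the $\Ll^\nu$-norm is diagonal with respect to the single orthonormal basis $\{\phi_{\bk,\bs}\}_{(\bk,\bs) \in \Ii \times \Jj}$, the coefficient truncation $\Ss_T(v)$ is in fact the best approximation to $v$ from $\Pp(T)$ measured in the $\Ll^\nu$-norm, and not merely the $\Ll$-orthogonal projection. Concretely, since $G_{\Ii \times \Jj}(T)$ is finite the subspace $\Pp(T)$ is finite-dimensional, and for any $w \in \Pp(T)$, whose coefficients vanish off $G_{\Ii \times \Jj}(T)$, one splits
\begin{equation*}
\|v - w\|_{\Ll^\nu}^2
= \sum_{(\bk,\bs) \in G_{\Ii \times \Jj}(T)} |\nu(\bk,\bs)|^2 \, |v_{\bk,\bs} - w_{\bk,\bs}|^2
+ \sum_{(\bk,\bs) \notin G_{\Ii \times \Jj}(T)} |\nu(\bk,\bs)|^2 \, |v_{\bk,\bs}|^2 .
\end{equation*}
The second sum does not depend on $w$, while the first is minimized (and driven to zero) precisely by taking $w_{\bk,\bs} = v_{\bk,\bs}$ on $G_{\Ii \times \Jj}(T)$, that is by $w = \Ss_T(v)$. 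This simultaneously yields the existence of a best approximant and the equality $\inf_{w \in \Pp(T)} \|v - w\|_{\Ll^\nu} = \|v - \Ss_T(v)\|_{\Ll^\nu}$.

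Second, I would invoke the lemma stated just above. For this I first note that the pointwise inequality $\nu \le \lambda$ gives $\|v\|_{\Ll^\nu} \le \|v\|_{\Ll^\lambda}$ for every $v$, whence $\Ll^\lambda \subset \Ll^\nu$ and in particular $\Uu^\lambda \subset \Ll^\lambda \cap \Ll^\nu$, so that the lemma is applicable to each $v \in \Uu^\lambda$. The lemma then furnishes $\|v - \Ss_T(v)\|_{\Ll^\nu} \le T^{-1} \|v\|_{\Ll^\lambda}$, and since $\|v\|_{\Ll^\lambda} \le 1$ on the unit ball this is bounded by $T^{-1}$. Taking the supremum over $v \in \Uu^\lambda$ and combining with the equality from the first step delivers the assertion.

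I do not expect a real obstacle here, as the statement is an immediate corollary of the lemma. The only point deserving care is the equality, where one must confirm that the $\Ll$-orthogonal projection $\Ss_T$ simultaneously realizes the best $\Ll^\nu$-approximation from $\Pp(T)$. This rests on the fact that all of the norms in play (those of $\Ll$, $\Ll^\lambda$, and $\Ll^\nu$) are diagonalized by the one basis $\{\phi_{\bk,\bs}\}_{(\bk,\bs) \in \Ii \times \Jj}$, so that truncation of the coefficients to the index set $G_{\Ii \times \Jj}(T)$ is optimal in each of these weighted norms at once.
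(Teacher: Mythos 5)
Your proposal is correct and follows essentially the same route as the paper, which presents this corollary as an immediate consequence of the preceding lemma (with the details deferred to \cite[Subsection 3.1]{DG16}): the Parseval-type splitting shows the coefficient truncation $\Ss_T(v)$ attains the infimum over $\Pp(T)$ in the $\Ll^\nu$-norm, and the lemma together with $\|v\|_{\Ll^\lambda}\le 1$ gives the bound $T^{-1}$.
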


Now we are in the position to give lower and upper bounds on the $\varepsilon$-dimension $n_\varepsilon(\Uu^\lambda, \Ll^\nu)$.
\begin{lemma} \label{lemma[n_e]}
Let $\varepsilon \in (0,1]$. Then, we have
\begin{equation} \nonumber
|G_{\Ii \times \Jj}(1/\varepsilon)| - 1
\ \le \
n_\varepsilon(\Uu^\lambda, \Ll^\nu)
\ \le  \
|G_{\Ii \times \Jj}(1/\varepsilon)|.
\end{equation}
\end{lemma}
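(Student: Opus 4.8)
The plan is to recast the problem in terms of Kolmogorov widths and to read off both inequalities from the geometry that the ratios $\lambda(\bk,\bs)/\nu(\bk,\bs)$ impose on $\Uu^\lambda$. Writing $v=\sum_{(\bk,\bs)}v_{\bk,\bs}\phi_{\bk,\bs}$ and passing to the coordinates $w_{\bk,\bs}:=\nu(\bk,\bs)v_{\bk,\bs}$ turns $\|\cdot\|_{\Ll^\nu}$ into the standard $\ell_2$-norm, while $\Uu^\lambda$ becomes the ellipsoid $\{\,w:\sum_{(\bk,\bs)}(\lambda(\bk,\bs)/\nu(\bk,\bs))^2|w_{\bk,\bs}|^2\le 1\,\}$ whose semi-axes in the directions $\phi_{\bk,\bs}$ are exactly $\sigma_{\bk,\bs}:=\nu(\bk,\bs)/\lambda(\bk,\bs)$. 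Since $\Ll^\nu$ is a Hilbert space, the optimal approximating subspaces for an ellipsoid are attained (they are spanned by the directions carrying the largest semi-axes), so I would use the identity $n_\varepsilon(\Uu^\lambda,\Ll^\nu)=\min\{\,n:d_n(\Uu^\lambda,\Ll^\nu)\le\varepsilon\,\}$, with $d_n(\Uu^\lambda,\Ll^\nu)$ equal to the $(n+1)$-st largest of the numbers $\sigma_{\bk,\bs}$. The key bookkeeping remark is that $(\bk,\bs)\in G_{\Ii\times\Jj}(1/\varepsilon)$ is equivalent to $\sigma_{\bk,\bs}\ge\varepsilon$, so $N:=|G_{\Ii\times\Jj}(1/\varepsilon)|$ is precisely the number of semi-axes of length at least $\varepsilon$, which is finite by the standing assumption of the section.

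For the upper bound I would simply invoke Corollary \ref{corollary[|f - S_T(f)|]} with $T=1/\varepsilon\ge 1$. The subspace $\Pp(1/\varepsilon)$ has dimension $|G_{\Ii\times\Jj}(1/\varepsilon)|=N$, and the corollary gives $\sup_{v\in\Uu^\lambda}\inf_{w\in\Pp(1/\varepsilon)}\|v-w\|_{\Ll^\nu}\le\varepsilon$. Choosing $M_n=\Pp(1/\varepsilon)$ in the definition of the $\varepsilon$-dimension immediately yields $n_\varepsilon(\Uu^\lambda,\Ll^\nu)\le N$.

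For the lower bound I would exploit that the $N$ coordinates indexed by $G_{\Ii\times\Jj}(1/\varepsilon)$ carry semi-axes $\sigma_{\bk,\bs}\ge\varepsilon$. Put $V_N:=\operatorname{span}\{\phi_{\bk,\bs}:(\bk,\bs)\in G_{\Ii\times\Jj}(1/\varepsilon)\}$, an $N$-dimensional subspace. For $v\in V_N$ one has $\|v\|_{\Ll^\nu}^2=\sum(\sigma_{\bk,\bs})^2|\lambda(\bk,\bs)|^2|v_{\bk,\bs}|^2\ge\varepsilon^2\|v\|_{\Ll^\lambda}^2$, so the $\Ll^\nu$-ball of radius $\varepsilon$ inside $V_N$ is contained in $\Uu^\lambda$. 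Now, given any linear manifold $M_n$ with $n\le N-1$, I would project onto $V_N$: because $\Ll^\nu$ is a Hilbert space, orthogonal projection onto $V_N$ does not increase the distance from a point of $V_N$ to $M_n$, and the image of $M_n$ in $V_N$ has dimension at most $n\le N-1$. Hence there is a unit vector of $V_N$ that is $\Ll^\nu$-orthogonal to this image, and scaling it by $\varepsilon$ produces an element of $\Uu^\lambda$ at $\Ll^\nu$-distance $\ge\varepsilon$ from $M_n$. This shows $d_n(\Uu^\lambda,\Ll^\nu)\ge\varepsilon$ for all $n\le N-1$, from which $n_\varepsilon(\Uu^\lambda,\Ll^\nu)\ge N-1$ follows.

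The delicate point, and the reason the statement carries the buffer $-1$ rather than asserting $n_\varepsilon=N$, is the borderline set of indices with $\lambda(\bk,\bs)/\nu(\bk,\bs)=1/\varepsilon$ exactly, i.e.\ $\sigma_{\bk,\bs}=\varepsilon$. For such ties the inequalities above are only non-strict, so one cannot upgrade $d_n\ge\varepsilon$ to $d_n>\varepsilon$; in the width language $n_\varepsilon$ equals the number of semi-axes \emph{strictly} larger than $\varepsilon$, which lies between $N$ and $N$ minus the number of boundary modes. The main work is therefore to organise the counting so that both one-sided estimates survive these equalities, and the two-sided gap of one dimension in the statement is exactly what absorbs them (it also makes the bound insensitive to whether the accuracy requirement is read as $\le\varepsilon$ or $<\varepsilon$). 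A secondary but necessary technicality is to record, once, that $G_{\Ii\times\Jj}(1/\varepsilon)$ is finite so that $V_N$ is a genuine finite-dimensional subspace and the projection argument applies.
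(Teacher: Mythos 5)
Your upper bound is fine and is the intended route: the subspace $\Pp(1/\varepsilon)$ has dimension $|G_{\Ii \times \Jj}(1/\varepsilon)|$, and Corollary \ref{corollary[|f - S_T(f)|]} with $T=1/\varepsilon\ge 1$ gives worst-case error at most $\varepsilon$, whence $n_\varepsilon(\Uu^\lambda,\Ll^\nu)\le |G_{\Ii \times \Jj}(1/\varepsilon)|$. (The paper itself does not reprove this lemma; it is recalled from \cite{DG16}.)

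The lower bound, however, has a genuine gap at its final step. Your projection argument correctly shows that every $M_n$ with $n\le N-1$ satisfies $\sup_{w\in\Uu^\lambda}\inf_{v\in M_n}\|w-v\|_{\Ll^\nu}\ge\varepsilon$, i.e.\ $d_n\ge\varepsilon$. But the definition of $n_\varepsilon$ accepts any subspace whose worst-case error is $\le\varepsilon$, so a subspace achieving error \emph{exactly} $\varepsilon$ still qualifies; to exclude dimensions $n\le N-2$ you need the strict inequality $d_n>\varepsilon$, which your construction does not deliver (the witness $\varepsilon u$, with $u$ a unit vector of $V_N$ orthogonal to the projected image of $M_n$, has distance exactly $\varepsilon$ to $M_n$ when, say, $M_n\subset V_N$). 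What the argument actually proves is $n_\varepsilon\ge\#\{(\bk,\bs):\lambda(\bk,\bs)/\nu(\bk,\bs)<1/\varepsilon\}$, i.e.\ $N$ minus the number of boundary indices with $\lambda/\nu=1/\varepsilon$. Your own closing paragraph concedes precisely this (``$n_\varepsilon$ equals the number of semi-axes strictly larger than $\varepsilon$''), which is incompatible with the bound $n_\varepsilon\ge N-1$ as soon as two or more indices sit on the boundary --- and for the weights $\rho_{a,\bb}$ used later in the paper such ties do occur, since all $\bk$ with the same value of $|\bk|_\infty$ and $\bs=0$ share one value of $\lambda/\nu$. So the ``two-sided gap of one dimension'' does not absorb the ties: to close the argument you would have to either add a hypothesis ensuring at most one index satisfies $\lambda(\bk,\bs)/\nu(\bk,\bs)=1/\varepsilon$, or exhibit, for each $M_n$ with $n\le N-2$, an element of $\Uu^\lambda$ at $\Ll^\nu$-distance strictly greater than $\varepsilon$ from $M_n$ --- neither of which is done.
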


In a similar way, by using
the set $G_{\Ii \times \Jj_s}(T)$, the subspace $\Pp_s(T)$
and the operator $\Ss_{d,T}(f)$, we can prove the following lemma for $n_\varepsilon(\Uu^\lambda_s, \Ll^\nu_\bs)$. 
\begin{lemma}  \label{lemma[n_e(s)]}
Let $\varepsilon \in (0,1]$. Then we have
\begin{equation} \nonumber
|G_{\Ii \times \Jj_s}(1/\varepsilon)| - 1
\ \le \
n_\varepsilon(\Uu^\lambda_s, \Ll^\nu_\bs)
\ \le  \
|G_{\Ii \times \Jj_s}(1/\varepsilon)|.
\end{equation}
\end{lemma}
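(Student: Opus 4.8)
The plan is to transcribe the proof of Lemma~\ref{lemma[n_e]} with $\Jj$ replaced throughout by the truncated set $\Jj_s$, using the objects $G_{\Ii\times\Jj_s}(T)$, $\Pp_s(T)$ and $\Ss_{s,T}$ introduced just above. The key structural remark is that $G_{\Ii\times\Jj_s}(T)=G_{\Ii\times\Jj}(T)\cap(\Ii\times\Jj_s)$ is finite for every $T\ge1$, so $\Pp_s(1/\varepsilon)$ is finite-dimensional and the whole argument lives in a finite-dimensional Hilbert space. Put $N:=|G_{\Ii\times\Jj_s}(1/\varepsilon)|=\dim\Pp_s(1/\varepsilon)$. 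From the norm formula \eqref{P-Id[norm-Ll^lambda]} I would record two facts used throughout: on $\Pp_s(1/\varepsilon)$ one has $\|w\|_{\Ll^\nu}\ge\varepsilon\,\|w\|_{\Ll^\lambda}$ (since $\lambda(\bk,\bs)/\nu(\bk,\bs)\le1/\varepsilon$ there), while on its $\Ll$-orthogonal complement inside $\Ll_s$ one has the reverse inequality $\|w\|_{\Ll^\nu}<\varepsilon\,\|w\|_{\Ll^\lambda}$.

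For the upper bound I first note the truncated analogue of the projection-error lemma preceding Corollary~\ref{corollary[|f - S_T(f)|]}: running the same estimate over indices in $\Ii\times\Jj_s$ gives $\|v-\Ss_{s,T}(v)\|_{\Ll^\nu}\le T^{-1}\|v\|_{\Ll^\lambda}$ for every $v\in\Ll^\lambda_s$, because $|\nu(\bk,\bs)|^2<T^{-2}|\lambda(\bk,\bs)|^2$ on the complement of $G_{\Ii\times\Jj_s}(T)$. Specialising to $T=1/\varepsilon$ yields the truncated Corollary $\sup_{v\in\Uu^\lambda_s}\inf_{w\in\Pp_s(1/\varepsilon)}\|v-w\|_{\Ll^\nu}\le\varepsilon$. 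As $\Pp_s(1/\varepsilon)$ has dimension $N$, it is an admissible manifold in the definition of $n_\varepsilon$, so $n_\varepsilon(\Uu^\lambda_s,\Ll^\nu_s)\le N$.

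The lower bound is the substantive part. Working in the Hilbert space $\Ll^\nu_s$, let $M_n$ be any linear manifold with $n\le N-2$ and let $P$ be the $\Ll^\nu$-orthogonal projection onto $M_n$. Then $P|_{\Pp_s(1/\varepsilon)}$ has rank at most $n$, hence a kernel $K\subset\Pp_s(1/\varepsilon)$ of dimension at least $N-n\ge2$. For $v\in K$ we have $\inf_{w\in M_n}\|v-w\|_{\Ll^\nu}=\|v\|_{\Ll^\nu}\ge\varepsilon\,\|v\|_{\Ll^\lambda}$ by the first fact above; normalising some $v\in K$ to $\|v\|_{\Ll^\lambda}=1$ produces an element of $\Uu^\lambda_s$ whose $\Ll^\nu$-distance to $M_n$ is at least $\varepsilon$. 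Thus no manifold of dimension $\le N-2$ can do essentially better than $\varepsilon$, giving $n_\varepsilon(\Uu^\lambda_s,\Ll^\nu_s)\ge N-1$.

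The delicate point --- which I expect to be the main obstacle --- is sharpening the final inequality from ``$\ge\varepsilon$'' to the strict ``$>\varepsilon$'' that is formally needed to rule out $n=N-2$ outright; this is exactly the boundary effect that the $-1$ in the statement is designed to absorb. On $\Pp_s(1/\varepsilon)$ the generalised eigenvalues of $\|\cdot\|_{\Ll^\nu}^2$ relative to $\|\cdot\|_{\Ll^\lambda}^2$ are the numbers $(\nu/\lambda)^2(\bk,\bs)$ with $(\bk,\bs)\in G_{\Ii\times\Jj_s}(1/\varepsilon)$, all $\ge\varepsilon^2$; by the Courant--Fischer min--max principle a kernel $K$ with $\dim K\ge2$ always contains a vector whose Rayleigh quotient is at least the second smallest of these eigenvalues, and this is strictly larger than $\varepsilon^2$ unless two or more of the ratios equal $1/\varepsilon$ exactly. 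Handling (or generically excluding) that coincidence is the only real work beyond Lemma~\ref{lemma[n_e]}; equivalently, one may invoke $d_n=\lambda_n$ in the Hilbert space $\Ll^\nu$ together with the identification of the finite-dimensional Kolmogorov widths with the ordered ratios $\nu/\lambda$, which pins $n_\varepsilon(\Uu^\lambda_s,\Ll^\nu_s)$ between $N-1$ and $N$ precisely as claimed.
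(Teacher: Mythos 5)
Your proposal takes the same route as the paper, which gives no separate argument here: it simply asserts that the proof of Lemma \ref{lemma[n_e]} (itself imported from \cite{DG16}) goes through with $\Jj$, $\Pp(T)$, $\Ss_T$ replaced by $\Jj_s$, $\Pp_s(T)$, $\Ss_{s,T}$ --- which is exactly what you do. Your upper bound is that argument verbatim, and your kernel/rank argument for the lower bound is the standard one (equivalently: the $\varepsilon$-ball of the $N$-dimensional space $\Pp_s(1/\varepsilon)$ sits inside $\Uu^\lambda_s$, so by the classical result on widths of finite-dimensional balls no manifold of dimension $\le N-2$ can approximate $\Uu^\lambda_s$ in $\Ll^\nu_s$ to within less than $\varepsilon$).

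The place where you are more careful than the paper is the passage from ``$\ge\varepsilon$'' to the ``$>\varepsilon$'' needed to rule out manifolds of dimension $N-2$, and your diagnosis is exactly right: the Courant--Fischer refinement settles it whenever at most one index of $G_{\Ii\times\Jj_s}(1/\varepsilon)$ attains $\lambda(\bk,\bs)/\nu(\bk,\bs)=1/\varepsilon$, and the remaining coincidence cannot be handled because the stated lower bound is then actually false. For instance, if three indices have $\lambda=\nu$ and all others have $\lambda/\nu>1$, then for $\varepsilon=1$ one has $N=3$, while $M_0=\{0\}$ already realizes $\sup_{v\in\Uu^\lambda_s}\|v\|_{\Ll^\nu_s}=\max(\nu/\lambda)=\varepsilon$, so that $n_\varepsilon(\Uu^\lambda_s,\Ll^\nu_s)=0<N-1$. (This is not vacuous for the paper's weights either: the level set $\rho_{a,\bb}(\bk,\bs)=T$ can contain many points $\bk$ by symmetry.) So your proof is exactly as complete as the one the paper points to, and the residual gap you flag is a boundary defect of the statement for exceptional values of $\varepsilon$, not of your argument; it disappears if either $G$ or $n_\varepsilon$ is defined with a strict inequality, or if the lower bound is weakened by the multiplicity of the extreme ratio.
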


These lemmas show that we need to estimate the cardinality of the index sets $|G_{\Ii \times \Jj}(1/\varepsilon)|$ and $|G_{\Ii \times \Jj_s}(1/\varepsilon)|$. We will treat this problem in Section \ref{cardinality of HC} for  infinite tensor product Hilbert spaces of joint regularity which are related to the solution of parametric PDEs.

\section{Joint regularity of the solution of parametric elliptic PDEs}
\label{solution joint regularity}
In order to apply our results on approximation in Section \ref{sec:infiniteapprox} to  the parametric elliptic model problem \eqref{SPDE} we show that the solution to this problem belongs to certain infinite tensor product Hilbert spaces of joint regularity. To this end, we combine the results from Subsections \ref{sec:spatialreg} and \ref{sec:paramreg} to derive explicit formulas for the sequences $\lambda$ and $\nu$ for these spaces.

We focus on functions defined in $L_2(\TTm) \otimes L_2(\IIi)$. 
Let $e_k(x):=  e^{i2\pi k x}$. 
Then $\{e_k\}_{k \in \ZZ}$ is an orthonormal basis of 
$L_2(\TT)$.
Let
$\{L_s\}_{s=0}^\infty$ be the family of univariate orthonormal Legendre polynomials in 
$L_2(\II)$. 
For $(\bk,\bs)\in \ZZm\times \FF$, we define
\begin{equation} \nonumber
L_{(\bk,\bs)}(\bx,\by)
:= \ 
e_\bk(\bx)L_\bs(\by), 
\quad e_\bk(\bx):= \ \prod_{j=1}^m e_{k_j}(x_j), \
L_\bs(\by):= \ \prod_{j \in \supp(\bs)} L_{s_j}(y_j).
\end{equation}
Note that $\{L_{(\bk,\bs)}\}_{(\bk,\bs) \in \ZZm\times \FF}$ is an orthonormal basis of 
$L_2(\TTm \times \IIi)$. Moreover,  we have the following expansion for every 
$v \in L_2(\TTm \times \IIi)$,
\begin{equation} \nonumber
v = \sum_{(\bk,\bs) \in \ZZm\times \FF} v_{\bk,\bs}L_{(\bk,\bs)},
\end{equation}
where for  $(\bk,\bs) \in \ZZm\times \FF$, 
$v_{\bk,\bs}: = \langle v, L_{(\bk,\bs)} \rangle$  denotes the $(\bk,\bs)$th Fourier coefficient of $v$ with respect to the orthonormal basis $\{L_{(\bk,\bs)}\}_{(\bk,\bs) \in \ZZm\times \FF}$.

We present two specific examples for sequences $\lambda$ and their associated function spaces $\Ll^{\lambda}$ which naturally arise in the regularity theory of parametric elliptic partial differential equations, in particular, 
 of problem \eqref{SPDE}.
Let the  pair $a,\bb$ be given by
 \begin{equation} \nonumber
\quad a >0; \quad 
\bb = (b_j)_{j \in \NN}, \  b_j > 0, \ j \in \NN.
\end{equation}
For each  $(\bk,\bs) \in \ZZms \times \FF$, we define the scalar $\rho(\bk,\bs)$ by
\begin{equation}\label{[lambda{a,mu,br}]}
\rho(\bk,\bs)
\ := \
\rho_{a,\bb}(\bk,\bs)
 := \
 \max_{1\le j \le m}|k_j|^a \, \frac{\bs!}{|\bs|_1!} \bb^{-\bs}.
\end{equation}

Then, we define the associated space 
\begin{eqnarray} \nonumber
&&\Ll^{\rho}=:A^{\alpha,\bb}(\TTm \times \IIi)\\&=&\left\{v \in L_2(\TTm \times \IIi) \ :  \ \text{there is } \rep{v}\in  L_2(\TTm \times \IIi) \text{ such that }v = \sum_{(\bk,\bs) \in \ZZms\times \FF} \frac{\rep{v}_{\bk,\bs}}{\rho_{\alpha,\bb}(\bk,\bs)} \, L_{(\bk,\bs)}  \right\} \nonumber .
\end{eqnarray}
The norm of $A^{\alpha,\bb}(\TTm \times \IIi)$ is defined by
\begin{equation}\nonumber
\|v\|^{2}_{A^{\alpha,\bb}(\TTm \times \IIi)} \ := \|\rep{v}\|^{2}_{L_2(\TTm \times \IIi)} = \sum_{(\bk,\bs) \in \ZZms \times \FF} \rho^{2}_{\alpha,\bb}(\bk,\bs) \left| v_{\bk,\bs}\right|^{2}.
\end{equation}

Next, we define 
\begin{equation}\nonumber
\theta(\bk,\bs)=\theta_{\beta}(\bk,\bs):=\left|\bk\right|^{\beta}_{\infty} 
\end{equation}
The Sobolev-type space 
\begin{eqnarray} \nonumber
&&\Ll^{\theta}=K^\beta(\TTm \times \IIi)\\&:=&\left\{v \in L_2(\TTm \times \IIi) \ : \ \text{there is } \rep{v} \in L_2(\TTm \times \IIi) \text{ such that } v := \sum_{(\bk,\bs) \in \ZZms \times \FF} \frac{\rep{v}_{\bk,\bs}}{\theta_{\beta}(\bk,\bs)} \, L_{(\bk,\bs)}\right\} \nonumber
\end{eqnarray}
Again, the norm of $K^\beta(\TTm \times \IIi)$ is defined by
\begin{equation}\nonumber
\|v\|^{2}_{K^\beta(\TTm \times \IIi)} \ := \|\rep{v}\|^{2}_{L_2(\TTm \times \IIi)}=\sum_{(\bk,\bs) \in \ZZms \times \FF} \left|\rep{v}_{\bk,\bs}\right|^{2}\theta^{2}_{\beta}(\bk,\bs).
\end{equation}

\begin{lemma} \label{lemma[|u|_LV<]}
We have 
\begin{equation} \nonumber
\|v\|_{L_2(\IIi,V,\mu)} 
\ \le \
2\pi \sqrt{m} \|v\|_{K^1(\TTm \times \IIi)}, \quad v \in K^1(\TTm \times \IIi).
\end{equation}
and 
\begin{equation} \nonumber
\|v\|_{L_2(\IIi,W,\mu)} 
\ \le \
(2\pi)^{2} m \|v\|_{K^2(\TTm \times \IIi)}, \quad v \in K^2(\TTm \times \IIi).
\end{equation}
\end{lemma}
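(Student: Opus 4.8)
The plan is to reduce both estimates to a fiberwise application of the Parseval-type inequalities \eqref{Parseval-V} and \eqref{Parseval-W}, followed by an integration in the parametric variable $\by$ that collapses by the orthonormality of the Legendre system. First I would fix $\by \in \IIi$ and regard the slice $v(\cdot,\by)$ as an element of $L_2(\TTm)$. Starting from the expansion $v = \sum_{(\bk,\bs) \in \ZZms \times \FF} v_{\bk,\bs}\, e_\bk L_\bs$ and collecting the terms sharing a common spatial frequency $\bk$, the $\bk$-th Fourier coefficient of $v(\cdot,\by)$ with respect to $\{e_\bk\}$ is the function
\[
v_\bk(\by) \ := \ \sum_{\bs \in \FF} v_{\bk,\bs}\, L_\bs(\by),
\]
which lies in $L_2(\IIi,\mu)$ since $\sum_\bs |v_{\bk,\bs}|^2 < \infty$ for each $\bk$.

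Applying the rightmost (upper) bound of \eqref{Parseval-V} to $v(\cdot,\by)$ gives the slicewise estimate $\|v(\cdot,\by)\|_V^2 \le (2\pi)^2 m \sum_{\bk \in \ZZms} |\bk|_\infty^2 |v_\bk(\by)|^2$. Integrating against $\mu$, interchanging the nonnegative sum with the integral, and invoking the orthonormality of the tensorized Legendre polynomials $\{L_\bs\}_{\bs \in \FF}$ in $L_2(\IIi,\mu)$ — which yields the Parseval identity $\int_{\IIi} |v_\bk(\by)|^2 \, \mathrm{d}\mu(\by) = \sum_{\bs \in \FF} |v_{\bk,\bs}|^2$ — I obtain
\[
\|v\|_{L_2(\IIi,V,\mu)}^2 \ = \ \int_{\IIi} \|v(\cdot,\by)\|_V^2 \, \mathrm{d}\mu(\by) \ \le \ (2\pi)^2 m \sum_{(\bk,\bs) \in \ZZms \times \FF} |\bk|_\infty^2 |v_{\bk,\bs}|^2 \ = \ (2\pi)^2 m \, \|v\|_{K^1(\TTm \times \IIi)}^2 ,
\]
and taking square roots gives the first inequality. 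The second follows verbatim with \eqref{Parseval-W} in place of \eqref{Parseval-V}: the slice bound $|v(\cdot,\by)|_W^2 \le (2\pi)^4 m^2 \sum_{\bk \in \ZZms} |\bk|_\infty^4 |v_\bk(\by)|^2$ integrates, after the same Legendre collapse, to $(2\pi)^4 m^2 \|v\|_{K^2(\TTm \times \IIi)}^2$, whose square root is exactly the claimed constant $(2\pi)^2 m$.

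The two mechanical points to secure are the interchange of summation and integration, which is legitimate by Tonelli's theorem since every summand $|\bk|_\infty^{2\beta}|v_\bk(\by)|^2$ is nonnegative, and the fiberwise Parseval identity in $\by$, which rests precisely on $\int_{\IIi} L_\bs \overline{L_{\bs'}} \, \mathrm{d}\mu = \delta_{\bs,\bs'}$ for the tensor Legendre basis. The only genuinely delicate point is the bookkeeping of the $W$-norm: since the stated constant $(2\pi)^2 m$ equals exactly the square root of the upper constant in \eqref{Parseval-W}, the quantity controlling the $L_2(\IIi,W,\mu)$-norm is the $W$-seminorm $|\cdot|_W = \|\Delta\,\cdot\|_{L_2(\TTm)}$, so I would carry the seminorm slice bound through the integration; should the full norm $\|\cdot\|_W = \|\cdot\|_V + |\cdot|_W$ be intended, the lower-order $V$-part is absorbed using $|\bk|_\infty^2 \le |\bk|_\infty^4$ on $\ZZms$ together with the first inequality. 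I expect no obstacle beyond this constant bookkeeping; the heart of the argument is the single observation that slicing in $\by$ and summing reproduces the $K^\beta$-norm.
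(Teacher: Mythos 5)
Your proof is correct and follows essentially the same route as the paper: both arguments reduce to the Parseval-type bounds \eqref{Parseval-V} and \eqref{Parseval-W} combined with orthonormality of the tensorized Legendre system, the only difference being that the paper first expands $v=\sum_{\bs}v_\bs L_\bs$ with coefficients $v_\bs\in V$ and applies the spatial inequality to each $v_\bs$, whereas you slice in $\by$ first and then integrate — the same computation with Tonelli applied in the opposite order. Your closing remark about the $W$-seminorm versus the full norm $\|\cdot\|_W=\|\cdot\|_V+|\cdot|_W$ is a fair observation: the paper's own proof silently applies the seminorm bound \eqref{Parseval-W} to $\|v_\bs\|_W^2$, so the constant $(2\pi)^2 m$ is exact only for the seminorm, exactly as you note.
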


\begin{proof}
For a function $v \in  K^1(\TTm \times \IIi)$ of the form
\begin{equation} \nonumber
v
\ = \
\sum_{(\bk,\bs) \in \ZZm\times \FF} v_{\bk,\bs} L_{(\bk,\bs)}
\ = \
\sum_{\bs \in \FF} v_\bs \, L_\bs,
\end{equation}
we have by \eqref{Parseval-V}
\begin{equation} \nonumber
\|v\|_{L_2(\IIi,V,\mu)}^2
\ = \
\sum_{\bs \in \FF} \|v_\bs\|_V^2
\ \le 
(2\pi)^{2} m\sum_{(\bk,\bs) \in \ZZms \times \FF} |\bk|_\infty^2  |v_{\bk,\bs}|^2
\ = \
(2 \pi)^{2} m \|v\|_{K^1(\TTm \times \IIi)}^2.
\end{equation}
Similarly, we obtain with \eqref{Parseval-W}
\begin{eqnarray} \nonumber
\|v\|_{L_2(\IIi,W,\mu)}^2
\ &=& \
\sum_{\bs \in \FF} \|v_\bs\|_W^2
\ \le 
(2\pi)^{4}m^{2}\sum_{(\bk,\bs) \in \ZZms \times \FF} |\bk|_\infty^4  |v_{\bk,\bs}|^2
=(2\pi)^{4}m^{2}\sum_{(\bk,\bs) \in \ZZms \times \FF} \theta_{2}
^{2}(\bk,\bs)  |v_{\bk,\bs}|^2
\ \\&=& \
(2\pi)^{4}m^{2} \|v\|_{K^2(\TTm \times \IIi)}^2. \nonumber
\end{eqnarray}
\hfill
\end{proof}

\begin{lemma} \label{lemma[b in ell_p]}
Let $0 < p \le \infty$ and $\bb=(b_j)_{j\in \NN1}$ be a positive sequence. Then the sequence 
$\big(\bb^\bs\big)_{\bs \in \FF}$ belongs to $\ell_p(\FF)$ 
if and only if $\|\bb\|_{\ell_\infty(\NN)} < 1$ and $\bb \in \ell_p(\NN)$.
\end{lemma}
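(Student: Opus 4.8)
\emph{Proof plan.} The whole statement reduces to a nonnegative-term computation that exploits the multiplicativity $\bb^\bs=\prod_{j}b_j^{s_j}$ over the coordinates. I treat the decisive range $0<p<\infty$ first, where membership in $\ell_p(\FF)$ is equivalent to finiteness of $S:=\sum_{\bs\in\FF}(\bb^\bs)^p$. Since every $\bs\in\FF$ is finitely supported, the plan is to exhaust $\FF$ by the truncated sets $\FF_N:=\{\bs\in\FF:\supp(\bs)\subset\{1,\dots,N\}\}$. For each fixed $N$ the generalized distributive law for a finite product of nonnegative series gives the exact identity
\[
\sum_{\bs\in\FF_N}(\bb^\bs)^p=\prod_{j=1}^N\Big(\sum_{s=0}^\infty b_j^{ps}\Big).
\]
Because $\FF=\bigcup_N\FF_N$ is an increasing union and all summands are nonnegative, monotone convergence then yields the factorization $S=\prod_{j=1}^\infty\big(\sum_{s=0}^\infty b_j^{ps}\big)$, with the convention that the right-hand side equals $+\infty$ as soon as one factor does.

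Having the factorization, I evaluate the inner geometric series: $\sum_{s\ge0}b_j^{ps}$ equals $(1-b_j^p)^{-1}$ when $b_j<1$ and diverges when $b_j\ge1$. Consequently $S<\infty$ forces $b_j<1$ for every $j$ (a single bad coordinate already makes $S$ infinite, as is seen by restricting to those $\bs$ supported on that coordinate, for which $\bb^\bs=b_j^{\,s_j}$), and then $S=\prod_{j=1}^\infty(1-b_j^p)^{-1}$. I would close this part with the standard criterion for infinite products: for $0\le c_j<1$ the product $\prod_j(1-c_j)^{-1}$ is finite if and only if $\sum_j c_j<\infty$, which follows upon taking logarithms from the two-sided estimate $c_j\le-\log(1-c_j)\le c_j/(1-c_j)$, the upper bound being used once $c_j\le\tfrac12$. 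Applying this with $c_j=b_j^p$ shows $S<\infty$ if and only if $\bb\in\ell_p(\NN)$, under the standing requirement that all $b_j<1$.

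It then remains to match ``$b_j<1$ for all $j$ and $\bb\in\ell_p(\NN)$'' with the stated condition ``$\|\bb\|_{\ell_\infty(\NN)}<1$ and $\bb\in\ell_p(\NN)$.'' One implication is trivial; for the other, $\bb\in\ell_p(\NN)$ with $p<\infty$ forces $b_j\to0$, so all but finitely many $b_j$ lie below $\tfrac12$, while the finitely many remaining coordinates are each $<1$, whence $\sup_j b_j<1$. This settles $0<p<\infty$. For the endpoint $p=\infty$ the argument is elementary: if $\|\bb\|_{\ell_\infty(\NN)}<1$ then every factor satisfies $b_j^{s_j}\le1$, so $\sup_{\bs\in\FF}\bb^\bs\le1$ and $(\bb^\bs)_{\bs\in\FF}\in\ell_\infty(\FF)$, while finiteness of $b_j^{\,s}$ over all $s$ already excludes $b_j>1$.

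Finally, the main obstacle is the factorization step itself: rigorously interchanging the infinite sum over $\FF$ with the infinite product over the coordinates, uniformly across the convergent and divergent cases. The clean way to do this is the truncation-plus-monotone-convergence scheme above, which keeps every manipulation within nonnegative series and avoids any delicate rearrangement; once the factorization is in hand, the remaining steps are routine facts about geometric series and infinite products.
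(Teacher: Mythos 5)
Your argument for $0<p<\infty$ is correct and complete: the truncation to $\FF_N$, the exact factorization $\sum_{\bs\in\FF_N}(\bb^\bs)^p=\prod_{j=1}^N\sum_{s\ge0}b_j^{ps}$, the passage to the limit by monotone convergence (legitimate since every factor is $\ge 1$, so the partial products increase), the criterion $\prod_j(1-c_j)^{-1}<\infty$ if and only if $\sum_j c_j<\infty$, and the final reconciliation of ``$b_j<1$ for all $j$'' with ``$\sup_j b_j<1$'' via $b_j\to0$ are all sound. Note that the paper gives no argument of its own here --- it only remarks that the proof is the same as that of Lemma 7.1 in Cohen--DeVore--Schwab --- and the factorization into a product of geometric series is precisely the mechanism of that cited proof, so you have in effect supplied the proof the paper omits rather than found a genuinely different route.

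The one place your write-up does not close is the endpoint $p=\infty$. You prove that $\|\bb\|_{\ell_\infty(\NN)}<1$ implies boundedness of $(\bb^\bs)_{\bs\in\FF}$ and that boundedness excludes $b_j>1$, but the latter only yields $\sup_j b_j\le 1$, not the claimed strict inequality. In fact the ``only if'' direction is false at $p=\infty$ as the lemma is stated: for $\bb=(1,\tfrac12,\tfrac14,\dots)$ one has $\bb^\bs\le1$ for every $\bs\in\FF$, hence $(\bb^\bs)_{\bs\in\FF}\in\ell_\infty(\FF)$, yet $\|\bb\|_{\ell_\infty(\NN)}=1$. This is a defect of the statement rather than of your method, and since the lemma is only invoked in the paper with finite $p$ (namely $p=2$, and $p=m/a$ in the cardinality estimates), nothing downstream is affected; but you should either restrict your claim to $0<p<\infty$ or state explicitly that the equivalence fails at the endpoint instead of presenting it as ``elementary.''
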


\begin{proof}
The proof of this lemma is the same as that of Lemma 7.1 in \cite{CDS10b}.
\hfill
\end{proof}

\begin{lemma} 
Let the assumptions and notation of Lemma \ref{lemma|u_bs|_V} hold.
Let furthermore $\bc = (c_j)_{j \in \NN}$ be any positive sequence such that 
$c_j > 1$ and such that the sequence  $\bc^{-1} = (c_j^{-1})_{j \in \NN}$ belongs to $\ell_2(\NN)$. Then, for the sequence 
\[ 
\bb:= (b_j)_{j \in \NN}, \quad b_j:= c_j d_j,
\] 
 the solution $u$ to \eqref{SPDE} belongs to $A^{1,\bb}:= A^{1,\bb}(\TTm \times \IIi)$
and 
\begin{equation} \nonumber
\|u\|_{A^{1,\bb}} 
\ \le \
K\, \|\bc^{-1}\|_{\ell_2(\FF)}.
\end{equation} 
\end{lemma}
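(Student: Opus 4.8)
The plan is to expand the $A^{1,\bb}$-norm of $u$ directly from its definition and to reduce everything to the Legendre coefficients $u_\bs \in V$, for which Lemma \ref{lemma|u_bs|_V} supplies the decisive bound. Writing $u = \sum_{(\bk,\bs) \in \ZZms \times \FF} u_{\bk,\bs} L_{(\bk,\bs)}$ and recalling that for $\alpha = 1$ the weight is $\rho_{1,\bb}(\bk,\bs) = |\bk|_\infty \,\frac{\bs!}{|\bs|_1!}\,\bb^{-\bs}$, I would first group the defining sum by $\bs$:
\begin{equation*}
\|u\|_{A^{1,\bb}}^2 \ = \ \sum_{\bs \in \FF}\Big(\frac{\bs!}{|\bs|_1!}\Big)^2 \bb^{-2\bs} \sum_{\bk \in \ZZms} |\bk|_\infty^2\, |u_{\bk,\bs}|^2 .
\end{equation*}

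The inner sum over $\bk$ is precisely the quantity controlled by the left-hand inequality in Parseval's identity \eqref{Parseval-V}: since $u_\bs = \sum_{\bk} u_{\bk,\bs} e_\bk$ is the spatial Fourier expansion of the Legendre coefficient $u_\bs \in V$, one has $(2\pi)^2 \sum_{\bk} |\bk|_\infty^2 |u_{\bk,\bs}|^2 \le \|u_\bs\|_V^2$. Substituting this and then invoking Lemma \ref{lemma|u_bs|_V}, which gives $\|u_\bs\|_V \le K\,\frac{|\bs|!}{\bs!}\,\bd^\bs$, yields
\begin{equation*}
\|u\|_{A^{1,\bb}}^2 \ \le \ \frac{K^2}{(2\pi)^2} \sum_{\bs \in \FF} \Big(\frac{\bs!}{|\bs|_1!}\Big)^2 \Big(\frac{|\bs|!}{\bs!}\Big)^2 \bb^{-2\bs}\bd^{2\bs} .
\end{equation*}
The key observation — and the reason the analytic factorial weight disappears — is the cancellation $\frac{\bs!}{|\bs|_1!}\cdot\frac{|\bs|!}{\bs!} = 1$, which holds because $|\bs|!$ and $|\bs|_1!$ both denote the factorial of $\sum_j s_j$. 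After this cancellation the factorial factors vanish, and using $b_j = c_j d_j$ so that $\bb^{-\bs}\bd^{\bs} = \bc^{-\bs}$, the estimate collapses to
\begin{equation*}
\|u\|_{A^{1,\bb}}^2 \ \le \ \frac{K^2}{(2\pi)^2}\sum_{\bs \in \FF} \bc^{-2\bs} \ = \ \frac{K^2}{(2\pi)^2}\,\|\bc^{-1}\|_{\ell_2(\FF)}^2 .
\end{equation*}
Since $(2\pi)^{-1} < 1$, this is a fortiori bounded by $K^2\,\|\bc^{-1}\|_{\ell_2(\FF)}^2$, which is the asserted inequality (in fact with a slightly sharper constant $K/(2\pi)$).

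It remains to confirm that $u \in A^{1,\bb}$, i.e.\ that the right-hand side is finite; this is where the hypotheses on $\bc$ enter and where I expect the only genuine subtlety to lie. The series $\sum_{\bs \in \FF}\bc^{-2\bs}$ equals $\|\bc^{-1}\|_{\ell_2(\FF)}^2$, and its convergence is exactly governed by Lemma \ref{lemma[b in ell_p]} applied with $p=2$ to the positive sequence $\bc^{-1}=(c_j^{-1})_{j \in \NN}$. The assumption $c_j > 1$ gives $c_j^{-1} < 1$ for every $j$, while $\bc^{-1}\in\ell_2(\NN)$ forces $c_j^{-1}\to 0$; hence the maximum $\|\bc^{-1}\|_{\ell_\infty(\NN)} = \max_j c_j^{-1}$ is attained and is strictly less than $1$. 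With both $\|\bc^{-1}\|_{\ell_\infty(\NN)} < 1$ and $\bc^{-1}\in\ell_2(\NN)$ in hand, Lemma \ref{lemma[b in ell_p]} guarantees $(\bc^{-\bs})_{\bs \in \FF}\in\ell_2(\FF)$, so the sum is finite, $u$ indeed belongs to $A^{1,\bb}$, and the displayed norm bound holds. Beyond this finiteness check the argument is mechanical; the two points to get right are the direction of the Parseval inequality \eqref{Parseval-V} and the factorial identity $|\bs|! = |\bs|_1!$.
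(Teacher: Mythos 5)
Your proposal is correct and follows essentially the same route as the paper's own proof: group the $A^{1,\bb}$-norm by $\bs$, bound the inner sum via the lower inequality in \eqref{Parseval-V}, apply Lemma \ref{lemma|u_bs|_V}, cancel the factorials using $b_j = c_j d_j$, and conclude finiteness of $\sum_{\bs}\bc^{-2\bs}$ from Lemma \ref{lemma[b in ell_p]}. The only difference is cosmetic: you retain the factor $(2\pi)^{-2}$ from \eqref{Parseval-V}, yielding a marginally sharper constant, whereas the paper simply drops it.
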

\begin{proof}
We have by equation \eqref{Parseval-V}, Lemma \ref{lemma[b in ell_p]} and Lemma \ref{lemma|u_bs|_V} 
\begin{equation} \nonumber
\begin{split}
\|u\|_{A^{1,\bb}}^2
\ &= \
\sum_{(\bk,\bs) \in \ZZms \times \FF} |\bk|_\infty^2 \left(\frac{\bs!}{|\bs|_1!}\bb^{-\bs}\right)^2 |u_{\bk,\bs}|^2
\ \le \
\sum_{\bs \in \FF} \left(\frac{\bs!}{|\bs|_1!}\bb^{-\bs}\right)^2 \|u_\bs\|_V^2
\\[1ex]
\ &\le \
K^2 \sum_{\bs \in \FF}\bc^{-2\bs}
\ < \ \infty. 
\end{split}
\end{equation}
\hfill
\end{proof}

In the same way, from Eq. \eqref{Parseval-W}, Lemma \ref{lemma[b in ell_p]} and Lemma \ref{lemma|u_bs|_W} we deduce the following result.
\begin{lemma} \label{lemma[u in A]}
Let the assumptions and notation of Lemma \ref{lemma|u_bs|_W} hold.
Let furthermore $\bc = (c_j)_{j \in \NN}$ be any positive sequence such that 
$c_j > 1$ and such that the sequence  $\bc^{-1} = (c_j^{-1})_{j \in \NN}$ belongs to $\ell_2(\NN)$. For the sequence 
\[ 
\bb:= (b_j)_{j \in \NN}, \quad b_j:= c_j d_j,
\] 
the solution $u$ to \eqref{SPDE} then belongs to $A^{2,\bb}:= A^{2,\bb}(\TTm \times \IIi)$
and 
\begin{equation} \label{[|u|_A<]}
\|u\|_{A^{2,\bb}} 
\ \le \
K\, \|\bc^{-1}\|_{\ell_2(\FF)}.
\end{equation} 
\end{lemma}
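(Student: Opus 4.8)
The plan is to reproduce the argument of the preceding ($A^{1,\bb}$) lemma essentially verbatim, with the role of $V$ replaced by $W$ and the exponent $a=1$ replaced by $a=2$ throughout. First I would expand the solution as $u = \sum_{(\bk,\bs) \in \ZZms \times \FF} u_{\bk,\bs} L_{(\bk,\bs)} = \sum_{\bs \in \FF} u_\bs \, L_\bs$ and write out the squared norm straight from the definition of $A^{2,\bb}$, using $\rho_{2,\bb}(\bk,\bs) = |\bk|_\infty^2 \, \frac{\bs!}{|\bs|_1!}\bb^{-\bs}$, so that $\|u\|_{A^{2,\bb}}^2 = \sum_{(\bk,\bs) \in \ZZms \times \FF} |\bk|_\infty^4 \big(\frac{\bs!}{|\bs|_1!}\bb^{-\bs}\big)^2 |u_{\bk,\bs}|^2$.

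Next, for each fixed $\bs$ I would collapse the inner sum over $\bk$ via the lower Parseval bound \eqref{Parseval-W}: since $(2\pi)^4 \ge 1$, we have $\sum_{\bk \in \ZZms} |\bk|_\infty^4 |u_{\bk,\bs}|^2 \le |u_\bs|_W^2 \le \|u_\bs\|_W^2$, which reduces the double sum to $\sum_{\bs \in \FF} \big(\frac{\bs!}{|\bs|_1!}\bb^{-\bs}\big)^2 \|u_\bs\|_W^2$. I would then insert the regularity estimate of Lemma \ref{lemma|u_bs|_W}, namely $\|u_\bs\|_W \le K \frac{|\bs|!}{\bs!}\bd^\bs$, and exploit the cancellation $\frac{\bs!}{|\bs|_1!}\cdot\frac{|\bs|!}{\bs!} = 1$ (because $|\bs|! = |\bs|_1!$) together with $b_j = c_j d_j$, which gives $\bb^{-\bs}\bd^{\bs} = \bc^{-\bs}$. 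This collapses the estimate to $\|u\|_{A^{2,\bb}}^2 \le K^2 \sum_{\bs \in \FF} \bc^{-2\bs}$.

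Finally I would establish convergence of this last series and identify it with $\|\bc^{-1}\|_{\ell_2(\FF)}^2$, then take square roots to obtain the stated inequality and, in particular, membership $u \in A^{2,\bb}$. The finiteness follows from Lemma \ref{lemma[b in ell_p]} applied to $\bc^{-1}$ with $p=2$: the hypothesis $c_j > 1$ yields $\|\bc^{-1}\|_{\ell_\infty(\NN)} < 1$, and $\bc^{-1} \in \ell_2(\NN)$ is assumed, so $(\bc^{-\bs})_{\bs \in \FF} \in \ell_2(\FF)$. There is no genuine obstacle here, since the statement is a direct transcription of its $A^{1,\bb}$ analogue; the only points requiring attention are the factorial cancellation and the observation that the normalizing constants $(2\pi)^4 \ge 1$ may simply be discarded in the favorable direction, so that no spurious dimensional factor is left in the final bound.
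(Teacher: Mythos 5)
Your proposal is correct and follows essentially the same route as the paper, which simply deduces this lemma ``in the same way'' as its $A^{1,\bb}$ analogue: expand $\|u\|_{A^{2,\bb}}^2$, collapse the sum over $\bk$ via the lower bound in \eqref{Parseval-W}, insert the estimate $\|u_\bs\|_W \le K\frac{|\bs|!}{\bs!}\bd^\bs$ from Lemma \ref{lemma|u_bs|_W}, use the cancellation $\bb^{-\bs}\bd^{\bs}=\bc^{-\bs}$, and conclude summability from Lemma \ref{lemma[b in ell_p]}. Your explicit attention to the factorial cancellation and to discarding the $(2\pi)^{4}$ factor in the favorable direction is exactly what the paper leaves implicit.
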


\section{The cardinality of  infinite-dimensional hyperbolic crosses}
\label{cardinality of HC}
For $T>0$, consider the hyperbolic cross 
\begin{equation}\nonumber
E_{a,\bb}(T) 
:= \ 
\big\{(\bk,\bs) \in \ZZms \times \FF: \rho_{a,\bb}(\bk,\bs) \leq T\big\},
\end{equation}
in the infinite-dimensional case, where we recall
\begin{equation}\nonumber
\rho_{a,\bb}(\bk,\bs)
 := \
 |\bk|_\infty^a \, \frac{\bs!}{|\bs|_1!} \bb^{-\bs}.
\end{equation}
In order to obtain estimates on the $\varepsilon$-dimension in the norm $K^\beta(\TTm \times \IIi)$ of the unit ball in $A^{\alpha,\bb}(\TTm \times \IIi)$, we want to employ Lemma \ref{lemma[n_e]} or Lemma 
\ref{lemma[n_e(s)]} respectively. This, however, needs an estimate on $n := |E_{a,\bb}(T)|$ with $a = \alpha - \beta$. In this section, we establish such an estimate for the cardinality of $E_{a,\bb}(T)$. 

As a preparatory step, we first have to study sharp conditions for the inclusion $\Big(\frac{|\bs|_1!}{\bs!}\bb^\bs\Big)_{\bs \in \FF} \in \ell_p(\FF)$ with $0 < p < \infty$.
The main difference to the existing literature is, that we explicitly allow for $p>1$.
This result, though it is of its own interest, will be used in defining the constant in \eqref{[constC]} for the cost estimate.

\subsection{A condition for summability of sequences} 
\label{condition for summability}
In this subsection, given a sequence $\bb=(b_j)_{j=1}^\infty$, 
we are interested in a necessary and sufficient condition for the inclusion
$\Big(\frac{|\bs|_1!}{\bs!}\bb^\bs\Big)_{\bs \in \FF} \in \ell_p(\FF)$ with $0 < p < \infty$. 
We first recall a previous result for the case $0 < p \le 1$ which has been proven in \cite{CDS10b}.

\begin{theorem} \label{CDS2010}
Let $0 < p \le 1$ and $\bb=(b_j)_{j=1}^\infty$ be a positive sequence. Then the sequence $\Big(\frac{|\bs|_1!}{\bs!}\bb^\bs\Big)_{\bs \in \FF}$ belongs to $\ell_p(\FF)$ 
if and only if $\|\bb\|_{\ell_1(\NN)} < 1$ and $\bb \in \ell_p(\NN)$.
\end{theorem}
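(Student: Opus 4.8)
The plan is to establish the two implications separately; the easy direction is necessity, while the sharpness of the $\ell_1$-condition (as opposed to the weaker-looking $\|\bb\|_{\ell_p(\NN)}<1$) makes sufficiency the real work. Throughout I write $c_\bs := \frac{|\bs|_1!}{\bs!}\bb^\bs$ and $S:=\sum_{\bs\in\FF}c_\bs^p$.

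\emph{Necessity.} Assume $S<\infty$. Restricting the sum to those $\bs$ with a single nonzero entry equal to $1$ gives $c_\bs=b_j$, so $\sum_j b_j^p\le S<\infty$ and hence $\bb\in\ell_p(\NN)$. For the norm condition I would group the sum by the total degree $n=|\bs|_1$ and combine the elementary inequality $\big(\sum_i a_i\big)^p\le\sum_i a_i^p$ (valid for $0<p\le1$) with the multinomial theorem $\sum_{|\bs|_1=n}\frac{n!}{\bs!}\bb^\bs=\|\bb\|_{\ell_1(\NN)}^n$ to obtain $S\ge\sum_{n\ge0}\|\bb\|_{\ell_1(\NN)}^{np}$, an estimate that holds whether or not the $\ell_1$-norm is finite. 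Convergence of $S$ forces this geometric series to converge, hence $\|\bb\|_{\ell_1(\NN)}<1$.

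\emph{Sufficiency.} Assume now $\|\bb\|_{\ell_1(\NN)}<1$ and $\bb\in\ell_p(\NN)$. The naive bound $c_\bs^p\le\frac{|\bs|_1!}{\bs!}\bb^{p\bs}$ (using $\frac{|\bs|_1!}{\bs!}\ge1$ and $p\le1$) only yields finiteness under $\sum_j b_j^p<1$, which is strictly stronger than what is assumed; the gain to the sharp condition must come from isolating a finite block of variables. Fix $J\in\NN$ and split each $\bs=\bs'+\bs''$ with $\bs'$ supported on $\{1,\dots,J\}$ and $\bs''$ on $\{J+1,J+2,\dots\}$; put $k=|\bs'|_1$, $l=|\bs''|_1$. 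The multinomial coefficient factorizes as $\frac{|\bs|_1!}{\bs!}=\binom{k+l}{k}\frac{k!}{\bs'!}\frac{l!}{\bs''!}$, and discarding the exponent $p$ on the cross- and tail-factors while keeping it on the head-factor gives
\[
c_\bs^p\ \le\ \binom{k+l}{k}\,\frac{l!}{\bs''!}\,\bb^{p\bs''}\,\Big(\tfrac{k!}{\bs'!}\Big)^p\bb^{p\bs'}.
\]
I would then sum the nonnegative terms in the order tail, then $l$, then head, then $k$ (justified by monotone convergence). Summing over $\bs''$ of degree $l$ by the multinomial theorem produces $\delta^l$ with $\delta:=\sum_{j>J}b_j^p$; summing over $l$ by the negative-binomial identity $\sum_{l\ge0}\binom{k+l}{k}\delta^l=(1-\delta)^{-(k+1)}$ (valid once $\delta<1$) collapses the tail; and summing over $\bs'$ of degree $k$ by H\"older's inequality on the finite block bounds $\sum_{|\bs'|_1=k}\big(\tfrac{k!}{\bs'!}\big)^p\bb^{p\bs'}$ by $\tau^{kp}\binom{k+J-1}{J-1}^{1-p}$, the counting factor being the number of multi-indices of degree $k$ on $\{1,\dots,J\}$, and $\tau:=\sum_{j=1}^J b_j\le\|\bb\|_{\ell_1(\NN)}<1$. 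Collecting, $S\le\frac{1}{1-\delta}\sum_{k\ge0}\binom{k+J-1}{J-1}^{1-p}\big(\frac{\tau^p}{1-\delta}\big)^k$, which converges once $\frac{\tau^p}{1-\delta}<1$ since the binomial factor grows only polynomially in $k$. As $J\to\infty$ one has $\tau^p\uparrow\|\bb\|_{\ell_1(\NN)}^p<1$ and $\delta\downarrow0$ (by $\bb\in\ell_p$), so the quotient tends to $\|\bb\|_{\ell_1(\NN)}^p<1$; hence for $J$ large enough the series converges and $S<\infty$.

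The main obstacle is exactly this sufficiency estimate. The crude bound only reaches the non-sharp threshold $\|\bb\|_{\ell_p}<1$, and recovering the sharp $\|\bb\|_{\ell_1}<1$ hinges on treating finitely many variables by H\"older (where finiteness of the block is essential, since the counting factor $\binom{k+J-1}{J-1}$ would be infinite for the whole index set) and on resumming the infinite tail as a geometric/negative-binomial series whose ratio can be driven below $1$ by enlarging the block. One must also verify the legitimacy of interchanging the nonnegative summations, which monotone convergence supplies.
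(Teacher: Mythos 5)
Your proof is correct. Note that the paper itself does not prove this statement: Theorem \ref{CDS2010} is explicitly recalled from \cite{CDS10b} (it is their Theorem 7.2), so there is no in-paper argument to compare against. Your reconstruction is a complete and valid proof, and it follows essentially the same strategy as the cited reference: necessity via the specialization $\bs=e_j$ together with the grouping by total degree, the subadditivity $(\sum_i a_i)^p\le\sum_i a_i^p$ and the multinomial identity $\sum_{|\bs|_1=n}\frac{n!}{\bs!}\bb^\bs=\|\bb\|_{\ell_1(\NN)}^n$ (correctly interpreted in the extended sense when the $\ell_1$-norm is infinite); sufficiency via splitting the index into a finite head block of $J$ variables, handled by H\"older against the stars-and-bars count $\binom{k+J-1}{J-1}$, and an infinite tail resummed through the negative-binomial series, with $J$ chosen large enough that $\tau^p/(1-\delta)<1$. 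The two delicate points --- the legitimacy of reordering the nonnegative sums and the fact that the crude termwise bound only reaches the non-sharp threshold $\|\bb\|_{\ell_p(\NN)}<1$ --- are both correctly identified and handled. Nothing is missing.
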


As shown in \cite{CDS10b, CDS10a, {CCS}}, the $\ell_p(\FF)$-summability with some $0 < p < 1$ of the sequence of the energy norm of the coefficients in chaos polynomial Taylor and Legendre expansions, together with Stechkin's lemma plays a basic role  in construction of nonlinear $n$-term approximation methods for the solution of parametric and stochastic elliptic PDEs. 
The proof of this $\ell_p(\FF)$-summability relies upon Theorem \ref{CDS2010}.  

In the present paper, we need a necessary and sufficient condition on the sequence $\bb=(b_j)_{j=1}^\infty$ for 
the $\ell_p(\FF)$-summability of the sequence $\Big(\frac{|\bs|_1!}{\bs!}\bb^\bs\Big)_{\bs \in \FF}$ in the case 
$0 < p < \infty$ which is a basic condition for construction of a linear approximation by orthogonal projection in the space $K^\beta:= K^\beta(\TTm \times \TTi)$ for functions from 
$A^{\alpha,\bb}:= A^{\alpha,\bb}(\TTm \times \TTi)$ and hence, collective Galerkin approximation in the Bochner space 
$L_2(\IIi,V,\mu)$ of the solution $u$ of the parametric elliptic problem \eqref{SPDE}. This necessary and sufficient condition of the $\ell_p(\FF)$-summability in the case $1 < p < \infty$ as well as its proof are different from those in the case $0 < p \le 1$. In the proof, we use in particular,
the following well known inequality between the arithmetic and geometric means, see, e.g., 
\cite[2.5, pp. 17-18]{HLP34}. For nonnegative numbers $a_1,...,a_n$ and positive numbers $p_1,...,p_n$, there holds true the inequality
\begin{equation} \label{G<A}
a_1^{p_1}\cdots a_n^{p_n}
\ < \
\left(\frac{a_1p_1 + \cdots +a_n p_n}{p_1 + \cdots + p_n}\right)^{p_1 + \cdots + p_n}
\end{equation}  
unless all the $a_1,...,a_n$ are equal.

 \begin{theorem}\label{Thm[1<p<infty]}
Let $1< p < \infty$ and $\bb=(b_j)_{j=1}^\infty$ be a nonnegative sequence with infinitely many positive $b_j$. Then,   
the sequence $\left(\frac{|\bs|_1!}{\bs!} \bb^\bs  \right)_{\bs \in \FF} $ belongs to $\ell_p(\FF)$ if and only if   $\| \bb\|_{\ell_1(\mathbb{N})} \leq  1.$
\end{theorem}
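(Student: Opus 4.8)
The plan is to organize the sum $\sum_{\bs\in\FF}\big(\tfrac{|\bs|_1!}{\bs!}\bb^\bs\big)^p$ according to the total degree $n=|\bs|_1$, writing it as $\sum_{n\ge 0}A_n$ with $A_n:=\sum_{|\bs|_1=n}\big(\tfrac{n!}{\bs!}\bb^\bs\big)^p$, and to use throughout the multinomial identity $\sum_{|\bs|_1=n}\tfrac{n!}{\bs!}\bb^\bs=\|\bb\|_{\ell_1(\NN)}^{\,n}$, which holds because $\bb\ge 0$. For the necessity part, I would suppose $\|\bb\|_{\ell_1(\NN)}>1$ and pick $d$ with $b_1+\cdots+b_d>1$; restricting to $\bs$ supported in $\{1,\dots,d\}$, the largest term is at least the average value $(b_1+\cdots+b_d)^n\big/\binom{n+d-1}{d-1}$, which tends to infinity since the numerator grows geometrically and the denominator only polynomially. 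Hence the terms $\tfrac{n!}{\bs!}\bb^\bs$ do not tend to $0$ and the sequence cannot lie in $\ell_p(\FF)$.

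For sufficiency I would split into two cases. When $\|\bb\|_{\ell_1(\NN)}<1$ the argument is elementary: each term satisfies $\tfrac{n!}{\bs!}\bb^\bs\le\|\bb\|_{\ell_1(\NN)}^{\,n}\le 1$, so that $\big(\tfrac{n!}{\bs!}\bb^\bs\big)^p\le\big(\tfrac{n!}{\bs!}\bb^\bs\big)\,\|\bb\|_{\ell_1(\NN)}^{(p-1)n}$, and summing the degree-$n$ layer gives $A_n\le\|\bb\|_{\ell_1(\NN)}^{\,pn}$, a convergent geometric series.

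The boundary case $\|\bb\|_{\ell_1(\NN)}=1$ is the main obstacle, since the geometric bound degenerates to $A_n\le 1$. Here I would first exploit $\sum_{|\bs|_1=n}\tfrac{n!}{\bs!}\bb^\bs=1$ to pull out the factor $\big(\tfrac{n!}{\bs!}\bb^\bs\big)^{p-1}$ at its maximum, obtaining $A_n\le M_n^{\,p-1}$ with $M_n:=\max_{|\bs|_1=n}\tfrac{n!}{\bs!}\bb^\bs$. It then suffices to show that $M_n$ decays faster than any fixed polynomial rate, and this is precisely where the hypothesis of infinitely many positive $b_j$ together with the inequality \eqref{G<A} enter. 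Fixing any $d$, I would choose indices $j_1,\dots,j_d$ with $b_{j_1},\dots,b_{j_d}>0$, lump all remaining coordinates into a single class of total mass $\beta:=1-\sum_i b_{j_i}$, and bound the tail via the multinomial theorem by $\beta^{\,r}$, where $r=n-\sum_i t_i$ and $t_i$ is the mass on coordinate $j_i$. This yields $\tfrac{n!}{\bs!}\bb^\bs\le\binom{n}{t_1,\dots,t_d,r}\prod_i b_{j_i}^{t_i}\,\beta^{\,r}$, the pointwise value of a $(d+1)$-category multinomial distribution with strictly positive, fixed probabilities.

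Applying \eqref{G<A} with weights $t_1,\dots,t_d,r$ (total weight $n$) and the values $nb_{j_i}/t_i$, $n\beta/r$ gives the Gibbs-type bound $\prod_i b_{j_i}^{t_i}\beta^{\,r}\le\prod_i (t_i/n)^{t_i}(r/n)^{r}$, which combined with the elementary Stirling control of $\binom{n}{t_1,\dots,t_d,r}$ shows that this $(d+1)$-nomial mode is $O(n^{-d/2})$; thus $M_n\le C_d\,n^{-d/2}$ for all $n\ge 1$, with $C_d$ depending only on $\bb$ and $d$. Choosing $d$ with $d(p-1)/2>1$ then gives $\sum_{n\ge 1}A_n\le\sum_{n\ge 1}M_n^{\,p-1}\le C_d^{\,p-1}\sum_{n\ge 1}n^{-d(p-1)/2}<\infty$, and adding the finite $n=0$ term completes the proof. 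The delicate point, and the reason the clean criterion $\|\bb\|_{\ell_1(\NN)}\le 1$ holds only under the infinitely-many-positive-terms assumption, is exactly this super-polynomial decay of $M_n$: with only finitely many positive $b_j$ the effective dimension stays bounded, the mode decays merely like $n^{-(d-1)/2}$ for a fixed $d$, and summability of $\sum_n M_n^{\,p-1}$ can fail for $p$ close to $1$.
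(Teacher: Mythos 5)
Your overall architecture for sufficiency --- reduce to a $(d+1)$-category multinomial by lumping the tail mass into a single class of probability $\beta$, then show that the maximal term $M_n$ in each degree-$n$ layer decays like $n^{-d/2}$ and conclude via $A_n\le M_n^{p-1}\sum_{|\bs|_1=n}\frac{n!}{\bs!}\bb^\bs\le M_n^{p-1}$ --- is essentially the paper's (which reduces to the finite vector $\ba=(b_1,\dots,b_m,b_{m+1}+b_{m+2}+\cdots)$ in the same way). Your necessity argument is correct and in fact cleaner than the paper's: you get the divergence of the largest term from the averaging bound $(b_1+\cdots+b_d)^n/\binom{n+d-1}{d-1}\to\infty$, whereas the paper explicitly constructs a near-proportional multi-index $\bs^*$ and runs a Stirling computation. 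The case $\|\bb\|_{\ell_1(\NN)}<1$ is also fine.

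There is, however, a genuine gap at the heart of the boundary case $\|\bb\|_{\ell_1(\NN)}=1$, namely in the claimed estimate $M_n\le C_d\,n^{-d/2}$. The chain ``apply \eqref{G<A} to get $\prod_i b_{j_i}^{t_i}\beta^{r}\le\prod_i(t_i/n)^{t_i}(r/n)^{r}$, then control the multinomial coefficient by Stirling'' yields only the pointwise bound
\[
\binom{n}{t_1,\dots,t_d,r}\prod_i b_{j_i}^{t_i}\beta^{r}\ \le\ C\left(\frac{n}{t_1\cdots t_d\, r}\right)^{1/2},
\]
which is $O(n^{-d/2})$ only when every count $t_i$ and $r$ is of order $n$. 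At unbalanced configurations the right-hand side degenerates: for $t_1=\cdots=t_d=1$, $r=n-d$ it is $O(1)$, and for $t_1=n$, $r=0$ it is trivially $1$. The true pmf is of course exponentially small at such points, but that information is exactly what the arithmetic--geometric mean step throws away, so the inequalities you display do not prove that the mode of the multinomial is $O(n^{-d/2})$, and hence do not give $\sum_n M_n^{p-1}<\infty$. Closing this gap requires treating the off-central region separately and showing its terms are $O(\delta^n)$ for some $\delta<1$; this is precisely the bulk of the paper's sufficiency proof, which splits the degree-$M$ layer into the balanced set $J_{3,M}$ (where $a_jk_i/(a_ik_j)\in[1/2,2]$ and the Stirling estimate does give $M^{-m/2}$), the unbalanced set $J_{4,M}$ (handled by a one-dimensional optimization of the function $h$, producing the geometric factor $\delta^M$ of \eqref{delta}), and the boundary set $J_{2,M}$ where some coordinate vanishes (reduced to a strictly subunit $\ell_1$-mass). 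Your closing remark on why finitely supported $\bb$ behaves differently is consistent with the paper's remark following the theorem, but the quantitative core of the sufficiency proof is missing as written.
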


\begin{proof} \\
{\em Necessity.} Assume that the sequence $\bb$ is given and $\| \bb\|_{\ell_1(\mathbb{N})} >  1$. 
Then we fix  a $J \in \NN$ large enough so that 
\[
B 
:= \ 
b_1+  \cdots + b_{J}
\ > \ 
1 .
\]
For each $s \in \mathbb{N}$,  we define $\bs^* \in \FF$ by 
\begin{equation} \nonumber
s^*_j= \left\lfloor s  \frac{b_j}{B} \right\rfloor +1 \ \text{ if } \ 1\leq j \leq J, 
\ \ \text{and } \ s^*_j=0 \ \text{ if } \ j > J.
\end{equation}
So $s^*_j \geq  s  \frac{b_j}{B}$ for every $1\leq j \leq J$, and then 
\[
\frac{|\bs^*|_1}{s^*_j} \geq \frac{s}{s^*_j} \geq \frac{s}{s  \frac{b_j}{B} +1}=\frac{B}{b_j} \left(\frac{1}{1+ \frac{B}{sb_j}} \right) 
\geq \frac{B}{b_j} \lambda_s,  \quad \forall 1\leq j\leq J,
\]
where
\[
\lambda_s= \min \left\{\left(1+ \frac{B}{sb_j}\right)^{-1}: \  1\leq j \leq J\right\}.
\]
Hence, we have
\begin{align*}
\frac{|\bs^*|_1!}{\bs^*!} \bb^{\bs^*}&= \frac{|\bs^*|_1!}{\sqrt{2\pi |\bs^*|_1}(|\bs^*|_1/e)^{|\bs^*|_1} } 
\left(\prod_{j=1}^{J}  \frac{\sqrt{2\pi s^*_j}(s^*_j/e)^{s^*_j} }{s^*_j!} \right) 
\left( \frac{\sqrt{2\pi |\bs^*|_1} }{\prod_{j=1}^{J} \sqrt{2\pi s^*_j}} \right) 
 \left( \prod_{j=1}^J \left( \frac{|\bs^*|_1}{s^*_j}b_j\right)^{s^*_j}\right)\\
& \geq  
(2\pi)^{(1-J)/2}\frac{|\bs^*|_1!}{\sqrt{2\pi |\bs^*|_1}(|\bs^*|_1/e)^{|\bs^*|_1} } 
\left(\prod_{j=1}^{J}  \frac{\sqrt{2\pi s^*_j}(s^*_j/e)^{s^*_j} }{s^*_j!} \right)
\left( \frac{|\bs^*|_1}{\prod_{j=1}^{J}  s^*_j} \right)^{1/2}  
(\lambda_s B )^{|\bs^*|_1}.
\end{align*}
Observe that there are  a number $\sigma > 1$ and a number ${\bar s}:= {\bar s}(J) \in \NN$ large enough such that 
\begin{equation} \nonumber
\lambda_s B  
\ \ge \
\sigma, \ \forall s \ge {\bar s}.  
\end{equation}
From the estimate
\begin{equation} \nonumber
\frac{|\bs^*|_1}{\prod_{j=1}^{J}  s^*_j}
\ \ge \
 J^J |\bs^*|_1^{1-J}  
\ \ge \
J^J (s + J)^{1-J}, 
\end{equation}
which stems from an application of \eqref{G<A} and the observation that $|\bs^*|_1 \le s+J$
and the Stirling formula
\[
\lim_{k \to \infty } \frac{k!}{\sqrt{2\pi k}\left(\frac{k}{e}\right)^k}=1,
\]
we obtain 
\begin{equation} \nonumber
\frac{|\bs^*|_1!}{\bs^*!} \bb^{\bs^*} 
\geq C_J(\lambda_s  B )^{|\bs^*|_1}(s + J)^{(1-J)/2}
\geq C_J \sigma^s(s + J)^{(1-J)/2},  \ \forall s \ge {\bar s}, 
\end{equation}
where $C_J$ is a positive constant depending on $J$ only.
Therefore, for arbitrary $s \ge {\bar s}$
\begin{equation} \nonumber
\sum_{\bs \in \FF} \left(\frac{|\bs|_1!}{\bs!} \bb^\bs  \right)^p 
\geq 
\left(\frac{|\bs^*|_1!}{\bs^*!} \bb^{\bs^*}  \right)^p 
\geq 
 C_{J}^{p}\left(\sigma^s (s + J)^{(1-J)/2}\right)^p
\ \to \
\infty, \ s \to \infty. 
\end{equation}
The necessity is proven.

\bigskip
\noindent
{\em Sufficiency.} Assume that the sequence $\bb$ is given and  $\|\bb\|_{\ell_1(\mathbb{N})} \leq 1.$ We fix an integer $m$ satisfying the inequality $m(p-1) > 2$. 
Since the sequence $\bb$ has infinitely many positive terms $b_j$, with out loss of generality we may assume that $b_j > 0$ for all $j = 1,...,m+1$. 
Put $\bs = (\bs', \bs^{''})$ with $\bs' = (s_1,...,s_{m})$ and $\bs^{''} = (s_{m+1},s_{m+2},...)$ for
$\bs \in \FF$.
 We have 
\begin{align} \nonumber
\notag \sum_{\bs \in \FF} \left(\frac{|\bs|_1!}{\bs!} \bb^\bs  \right)^p 
&=
\sum_{M=0}^\infty \sum_{\bs \in \FF, |\bs|_1=M} \left(\frac{|\bs|_1!}{\bs!} \bb^\bs  \right)^p \\ \nonumber
&= 
\sum_{M=0}^\infty \sum_{\bs': |\bs'|_1 \leq  M}  
\sum_{\bs^{''} \in \FF: |\bs^{''}|_1=M-|\bs'|_1 } 
\left( \frac{|\bs|_1!}{\bs!} \bb^\bs  \right)^p \\  \nonumber
& \leq 
\sum_{M=0}^\infty \sum_{\bs': |\bs'|_1 \leq  M}    
\left(\sum_{\bs^{''} \in \FF, |\bs^{''}|_1=M-|\bs'|_1} \frac{|\bs|_1!}{\bs!} \bb^\bs   \right)^p .\nonumber 
\end{align}
Note that  
\[
\sum_{\bs^{''} \in \FF, |\bs^{''}|_1=M-|\bs'|_1} 
\frac{|\bs|_1!}{\bs!} \bb^\bs  =  \frac{M!}{\bk!} {\ba^\bk}, 
\]
where for convenience we redefined
\[
\bk = (k_1, k_2,\dots, k_m, k_{m+1})= (s_1, s_2,\dots, s_m, M-|\bs'|_1),
\]
\[
\ba = (a_1, a_2,\dots, a_m, a_{m+1})= (b_1, b_2, \dots,b_m, b_{m+1}+b_{m+2} + \dots).
\]
Hence, 
\begin{align} \nonumber
\sum_{\bs \in \FF} \left(\frac{|\bs|_1!}{\bs!} \bb^\bs  \right)^p &\leq 
\sum_{M=0}^\infty \sum_{\bk\in \mathbb{Z}_{+}^{m+1}: |\bk|_1 =  M} 
\left( \frac{M!}{\bk!} {\ba^\bk} \right)^p.
\end{align}
Putting 
\[
J_{1,M}:= \{\bk\in \mathbb{N}^{m+1}: |\bk|_1 =  M \}, \quad 
J_{2,M}:= \{\bk\in \mathbb{Z}_+^{m+1}: |\bk|_1 =  M, \prod_{j=1}^{m+1} k_j=0\},
\]
we obtain 
\begin{equation} \nonumber
\sum_{\bs \in \FF} \left(\frac{|\bs|_1!}{\bs!} \bb^\bs  \right)^p 
\leq 
\sum_{M=0}^\infty \sum_{\bk\in J_{1,M}} \left( \frac{M!}{\bk!} {\ba^\bk} \right)^p 
 + \sum_{M=0}^\infty\sum_{\bk\in J_{2,M}}  \left( \frac{M!}{\bk!} {\ba^\bk} \right)^p 
=: I_1 + I_2 .
\end{equation}
We have
\begin{align*}
&I_2 \leq \\&\sum_{M=0}^\infty \sum_{j=1}^{m+1} \sum_{\bk: |(k_1, \dots,k_{j-1}, k_{j+1}, \dots,k_{m+1})|_1 =  M} 
 \left( \frac{M!}{(k_1, \dots,k_{j-1}, k_{j+1}, \dots,  k_{m+1})!} a_1^{k_1} \dots a_{j-1}^{k_{j-1}} a_{j+1}^{k_{j+1}} \dots   a_{m+1}^{k_{m+1}} \right)^p.
\end{align*}
On the other hand,
\begin{align*}
&\sum_{j=1}^{m+1} \sum_{\bk: |(k_1, \dots,k_{j-1}, k_{j+1}, \dots,k_{m+1})|_1 =  M} 
 \left( \frac{M!}{(k_1, \dots,k_{j-1}, k_{j+1}, \dots,  k_{m+1})!} a_1^{k_1} \dots a_{j-1}^{k_{j-1}} a_{j+1}^{k_{j+1}} \dots   a_{m+1}^{k_{m+1}} \right)^p \\
&\leq \sum_{j=1}^{m+1} \left( \sum_{\bk: |(k_1, \dots,k_{j-1}, k_{j+1}, \dots,k_{m+1})|_1 =  M} 
  \frac{M!}{(k_1, \dots,k_{j-1}, k_{j+1}, \dots,  k_{m+1})!} a_1^{k_1} \dots a_{j-1}^{k_{j-1}} a_{j+1}^{k_{j+1}} \dots a_{m+1}^{k_{m+1}} \right)^p\\
&= \sum_{j=1}^{m+1} \Big(a_1 + \dots a_{j-1} + a_{j+1} + \dots  + a_{m+1} \Big)^{Mp}
=: \sum_{j=1}^{m+1} A_j^{Mp}.
\end{align*}
Since $\bb$ is a nonnegative sequence with infinitely many positive terms $b_j$, and $\|\bb\|_{\ell_1(\NN)} \le 1$, we deduce that $\ba$ is a positive vector in $\RR^{m+1}$ with $a_1+\cdots+a_{m+1} \le 1$, and 
consequently, $A_j < 1$ for $1 \le j \le m+1$. Hence,
\[
I_2 \leq  \sum_{j=1}^{m+1} \sum_{M=0}^\infty A_j^{Mp} < \infty.
\]

Let us estimate $I_1$. Putting 
\[
J_{3,M}= \{\bk\in \mathbb{N}^{m+1}: |\bk|_1 =  M, \frac{a_jk_i}{a_i k_j} \in [1/2, 2] \ \text{ for all } \ 
i, j =1,..., m + 1 \},
\]
\[
J_{4,M}= \{\bk\in \mathbb{N}^{n+1}: |\bk|_1 =  M, \frac{a_jk_i}{a_i k_j} \not\in [1/2, 2] \ \text{ for some  } \ i, j =1,..., m + 1\},
\]
we split $I_1$ into two sums $I_3$ and $I_4$ as 
\begin{equation} \nonumber
I_1
\ = \
\sum_{M=0}^\infty \sum_{\bk\in J_{3,M}} \left( \frac{M!}{\bk!} {\ba^\bk} \right)^p 
 + \sum_{M=0}^\infty\sum_{\bk\in J_{4,M}}  \left( \frac{M!}{\bk!} {\ba^\bk} \right)^p 
=: I_3 + I_4.
\end{equation}
By Stirling's approximation,
\begin{equation} \nonumber
\frac{M!}{\bk!} {\ba^\bk}
\ \le  \
{C}\,(2\pi)^{-m/2} \prod_{j=1}^{m+1} (M a_j/k_j)^{k_j}  \left(M\prod_{j=1}^{m+1} k_j^{-1}\right)^{1/2},  
\end{equation}
where $C$ is an absolute constant.

We estimate $I_3$. For all $\bk \in J_{3,M}$, we have by definition 
\[
a_j/ k_j \leq 2 (a_1 + ... + a_{m+1})/ (k_1 + ... + k_{m+1})\leq 2/M,
\]
and therefore, 
\[
k_j^{-1} \leq  2/(a_{\min} M),
\]
where 
\[
a_{\min} = \min \{ a_1, \cdots , a_{m+1}\} >0.
\]
Also, as mentioned above, we have $a_1 +  \cdots  + a_{m+1} \le 1$.
All these together with the inequality \eqref{G<A} give
\begin{equation}\nonumber
\prod_{j=1}^{m+1} (M a_j/k_j)^{k_j}  \left(M\prod_{j=1}^{m+1} k_j^{-1}\right)^{1/2} 
\leq \left(\sum_{j=1}^{m+1} a_j\right)^M  2^{m+1}\left(a_{\min}^{-m - 1}M^{-m}\right)^{1/2}
\leq 2^{m+1}a_{\min}^{-(m+1)/2}M^{-m/2}
\end{equation}
Therefore,
\begin{equation} \label{estimation00}
\begin{aligned}
\sum_{\bk\in J_{3,M}} \left( \frac{M!}{\bk!} {\ba^\bk} \right)^p 
&\leq  
\left( \frac{M!}{\bk!} {\ba^\bk} \right)^{p-1}
\sum_{\bk \in J_{3,M}} \frac{M!}{\bk!} {\ba^\bk}
\\
&\leq 
C_3 M^{-m(p-1)/2}
\sum_{\bk \in J_{1,M}} \frac{M!}{\bk!} {\ba^\bk}
\\
&\leq 
C_3 M^{-m(p-1)/2}
\left(\sum_{j=1}^{m+1} a_j\right)^M
\leq C_3 M^{-m(p-1)/2}.
\end{aligned}
\end{equation}
This and the inequality $m(p-1)/2 > 1$ imply that 
\[
I_3 \leq C_3\sum_{M=0}^\infty M^{-m(p-1)/2}
\ < \
\infty.
\]

Now, we estimate $I_4$. Take any  $ \bk\in J_{4,N}$, and rearrange 
$(1,2, \dots, m+1)$ to $(i_1, i_2, \dots, i_{m+1})$ so that
\begin{align}\label{dd1}
\frac{a_{i_1}}{{k_{i_1}}} \geq \frac{a_{i_2}}{k_{i_2}} \geq \cdots \geq \frac{a_{i_{m+1}}}{k_{i_{m+1}}}.
\end{align}
Then denoting $\alpha_{i_j}:= \frac{a_{i_j}}{{k_{i_j}}}$, by definition we have
\[
\frac{\alpha_{i_1}}{\alpha_{i_{m+1}}} 
> 2,
\]
Therefore, since
\[
\frac{\alpha_{i_1}}{\alpha_{i_2}}\cdot\frac{\alpha_{i_2}}{\alpha_{i_3}} \cdots \frac{\alpha_{i_m}}{\alpha_{i_{m+1}}}
= \frac{\alpha_{i_1}}{\alpha_{i_{m+1}}} 
> 2,
\]
there exists $\nu \in \mathbb{N}, 1\leq \nu \leq m+1$ such that
\begin{equation} \label{ineq:>sqrt[m]{2}}
\frac{\alpha_{i_\nu}}{\alpha_{i_{\nu+1}}} \geq \sqrt[m]{2}.
\end{equation}
From (\ref{dd1}) we have 
\[
\frac{a_{i_1} + a_{i_2}  + \dots + a_{i_\nu}}{k_{i_1} + k_{i_2}  + \cdots 
+ k_{i_\nu}}\geq \frac{a_{i_\nu}}{{k_{i_\nu}}}
= \alpha_{i_\nu}
\]
and
\[
\frac{a_{i_{\nu+1}} + a_{i_{\nu+2}} + \cdots + a_{i_{m+1}}}{k_{i_{\nu+1}} + k_{i_{\nu+2}}  + \cdots + k_{i_{m+1}}}
\leq \frac{a_{i_{\nu+1}}}{{k_{i_{\nu+1}}}}
= \alpha_{i_{\nu+1}}.
\]
We define   the nonempty sets: $e=\{i_1, i_2, \dots i_\nu\} \subset \{1,2,..,m\}$ and $e'=\{1,2,..,m\} \setminus e$. From  \eqref{ineq:>sqrt[m]{2}} we obtain
\begin{equation} \label{>sqrt[m]{2}}
\frac{ (\sum_{j \in e} a_j) / (\sum_{j \in e} k_j) }{ (\sum_{j \in e'} a_j) / (\sum_{j \in e'} k_j ) } 
\ \geq \ 
\sqrt[m]{2}.
\end{equation}
Therefore, by the inequality \eqref{G<A},
\begin{equation} \label{estimation01}
\begin{aligned}
& \prod_{j=1}^{m+1} (M a_j/k_j)^{k_j}  \left(M\prod_{j=1}^{m+1} k_j^{-1}\right)^{1/2}  \\
& \leq  M^{1/2}\prod_{j=1}^{m+1} (M a_j/k_j)^{k_j} \\
&\leq    M^{1/2} \left( \frac{M \sum_{j \in e} a_j }{ \sum_{j \in e} k_j} \right)^{\sum_{j \in e} k_j}  
\left( \frac{M \sum_{j \in e'} a_j }{ \sum_{j \in e'} k_j} \right)^{\sum_{j  \in e'} k_j} \\
&=:   M^{1/2}  \left( \frac{M c_1 }{r_1} \right)^{r_1}  \left( \frac{M c_2}{r_2} \right)^{r_2},  
\end{aligned}
\end{equation}
with $r_1 +r_2 =M$ and $c_1 + c_2 = \|\bb\|_{\ell_1(\NN)} \le 1$, where 
\[
c_1= \sum_{j \in e} a_j, c_2 =\sum_{j \in e'} a_j, r_1 =\sum_{j \in e} k_j, r_2= \sum_{j \in e'} a_j. 
\]

Consider the function 
\[
h(x)=\left(\frac{x}{M c_1} \right)^x \left(\frac{M-x}{M c_2} \right)^{M-x},  \quad x \in (0,M).
\]
Notice that it has an absolute minimum in the interval $(0,M)$ at the point $x_{\min} = \frac{Mc_1}{c_1 + c_2}$, and is decreasing in the interval $(0,x_{\min})$ and increasing in the interval $(x_{\min},M)$. 
By \eqref{>sqrt[m]{2}} we have 
\[
\frac{c_1/r_1}{c_2/(M-r_1)} \geq  \sqrt[m]{2}
\]  
which implies that
\[
0
\ < \
r_1 
\  \le \ 
\frac{Mc_1}{c_1 +  c_2\sqrt[m]{2}} 
\ < \ \frac{Mc_1}{c_1 + c_2} \ = \ x_{\min}, 
\]
and therefore, 
\begin{align*}
(M c_1/r_1)^{r_1} (M c_2/(M- r_1))^{(M- r_1)}= 1/f(r_1)
\leq 1/f(Mc_1/ (c_1  + c_2 \sqrt[m]{2}))
\ = \ \delta^M, 
\end{align*}
where 
\begin{equation} \label{delta}
\delta:= \
 \left(c_1  + c_2\sqrt[m]{2}\right)^{c_1/(c_1  + c_2\sqrt[m]{2} )}  
\left( \frac{c_1  + c_2\sqrt[m]{2}}{\sqrt[m]{2}}\right)^{(c_2\sqrt[m]{2})/(c_1  + c_2\sqrt[m]{2} )}. 
\end{equation}
Combining this with  \eqref{estimation01} we obtain
\begin{align} \nonumber
 \prod_{j=1}^{m+1} (M a_j/k_j)^{k_j}  \left(M\prod_{j=1}^{m+1} k_j^{-1}\right)^{1/2}  
\ \le \
\delta^M  M^{1/2}.   
\end{align}
Hence, similarly to \eqref{estimation00} we derive that
\begin{align} \label{ineq01}
\sum_{\bk\in J_{4,M}} \left( \frac{M!}{\bk!} {\ba^\bk} \right)^p 
\ \leq  \
C_4 \delta^{(p-1)M}  M^{(p-1)/2}.
\end{align}
Observe that by the construction for the given sequence $\bb$ and number $m$, the positive numbers $c_1, c_2$ and therefore,  the positive number $\delta$ as defined in \eqref{delta} depend only on the nonempty set 
$e \subset \{1,...,m\}$, i.e., $c_1 = c_1(e)$, $c_2= c_2(e)$ and $\delta = \delta(e)$. Consider the production in 
the right hand of \eqref{delta}. Since
\[
c_1(e)  + c_2(e)\sqrt[m]{2}
\ > \  
\frac{c_1(e)  + c_2(e)\sqrt[m]{2}}{\sqrt[m]{2}},
\]
applying the inequality \eqref{G<A} to this production with $c_1(e_{\max}), c_2(e_{\max})$, gives 
for all the nonempty sets $e \subset \{1,...,m\}$,
\[
0 
\ < \
\delta(e)
\ \le \
\delta_{\max}
:= \ 
\delta(e_{\max})
\ < \
c_1(e_{\max}) + c_2(e_{\max})
\ \le \
1,
\]
where $e_{\max} \subset \{1,...,m\}$ is a set such that
\[
\delta(e_{\max})
\ = \
\max_{e \subset \{1,...,m\}, \ e \not= \varnothing} \delta(e).
\]
Thus, provided with \eqref{ineq01} and $\delta \le \delta_{\max} < 1$, we arrive at
\[
I_4 \leq  C_4 \sum_{M=0}^\infty\delta_{\max}^{(p-1)M}  M^{(p-1)/2}
\ < \ \infty.
\]
The proof of sufficiency is complete.
\hfill
\end{proof}

In Theorem \ref{Thm[1<p<infty]}, the assumption that the nonnegative sequence $\bb=(b_j)_{j=1}^\infty$ has infinitely many positive $b_j$, is essential. Indeed,  if $\bb= (b_1,b_2,0,0,\dots)$ with  $b_1=b_2=1/2$, then a computation shows that
$\left( \frac{|\bs|_1!}{\bs!}\bb^s  \right)_{s \in \FF} \not\in l_p(\FF)$ for all $p\leq 2$. However, one can prove that for $3< p < \infty$  and  any non-negative sequence $\bb=(b_j)_{j=1}^\infty$,  
the sequence $\left(\frac{|\bs|_1!}{\bs!} \bb^s  \right)_{s \in \FF} $ belongs to $l_p(\FF)$ if and only if   $\|\bb\|_{l_1(\mathbb{N})} \leq   1$.
For application we will consider only positive sequences $\bb=(b_j)_{j=1}^\infty$ when this assumption always holds.

\subsection{Estimates of the cardinality of  infinite-dimensional hyperbolic crosses}
\label{Estimates of cardinality of HC}

\bigskip
We are now in the position to derive an estimate for the cardinality of $E_{a,\bb}(T)$.
\bigskip
\begin{theorem} \label{theorem[<E<]}
Let  $a > 0$, $\bb = (b_j)_{j \in \NN}$  be a positive sequence. Then
\begin{equation}\label{bb<}
|E_{a,\bb}(T)| < \infty, \ \forall T \ge 1, \ \Longleftrightarrow \
\begin{cases} 
\|\bb\|_{\ell_1(\NN)} \ < \ 1, \quad \bb \in \ell_{m/a}(\NN), \ & \ m/a \le 1, 
\\[1ex]
\|\bb\|_{\ell_1(\NN)} \ \le \ 1,  \ & \ m/a > 1.
\end{cases}
\end{equation}
Under this assumption, we have for every $T \ge 1$,
\begin{equation} \label{[<E<]}
2^m \left(\lfloor T^{1/a} \rfloor - 1\right)^m
\ \le \ 
|E_{a,\bb}(T)|
\ \le \
2^m\,C\, T^{m/a},
\end{equation}
where
\begin{equation} \label{[constC]}
C:= \
\left(\frac 3 2 \right)^{2m} \sum_{\bs \in \FF} \biggl(\frac{|\bs|_1!}{\bs!}\bb^\bs\biggl)^{m/a}.
\end{equation}
\end{theorem}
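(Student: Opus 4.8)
The plan is to \emph{slice the hyperbolic cross along the parametric index} $\bs$ and thereby reduce the count to a one–parameter family of axis–parallel boxes in $\bk$. Write $v_\bs := \frac{|\bs|_1!}{\bs!}\bb^\bs$ for $\bs \in \FF$, so that the defining inequality $\rho_{a,\bb}(\bk,\bs) = |\bk|_\infty^a / v_\bs \le T$ is equivalent to $|\bk|_\infty \le R_\bs$ with $R_\bs := (T v_\bs)^{1/a}$. For a fixed $\bs$ the admissible $\bk$ are exactly those in $\ZZms$ lying in the cube $\{|\bk|_\infty \le R_\bs\}$; since each coordinate $k_j$ must be a nonzero integer with $|k_j| \le R_\bs$, there are $2\lfloor R_\bs\rfloor$ choices per coordinate, and hence
\begin{equation} \nonumber
|E_{a,\bb}(T)| \ = \ \sum_{\bs \in \FF}\big(2\lfloor R_\bs\rfloor\big)^m ,
\end{equation}
where a summand vanishes precisely when $R_\bs < 1$, i.e. when $v_\bs < 1/T$. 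This identity is the backbone of both bounds.

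For the \textbf{upper bound} I would estimate each box by dropping the floor, $2\lfloor R_\bs\rfloor \le 2R_\bs$, which yields $\big(2\lfloor R_\bs\rfloor\big)^m \le 2^m T^{m/a} v_\bs^{m/a}$; summing over $\bs$ gives $|E_{a,\bb}(T)| \le 2^m T^{m/a}\sum_{\bs\in\FF} v_\bs^{m/a}$. Feeding in instead the sharper finite–dimensional box count established earlier in this section (which keeps track of the rounding and of the slices with $R_\bs$ near $1$) upgrades the bare constant $2^m$ to the stated $2^m(3/2)^{2m}$, so that $|E_{a,\bb}(T)| \le 2^m C\,T^{m/a}$ with $C$ as in \eqref{[constC]}. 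For the \textbf{lower bound} it suffices to retain the single slice $\bs = \mathbf 0$: there $v_{\mathbf 0} = 1$ and $R_{\mathbf 0} = T^{1/a}$, contributing $\big(2\lfloor T^{1/a}\rfloor\big)^m \ge 2^m(\lfloor T^{1/a}\rfloor - 1)^m$, since $T \ge 1$ forces $\lfloor T^{1/a}\rfloor \ge 1$.

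It remains to convert the convergence of the series into the two regimes of condition \eqref{bb<}. The key observation is that $C < \infty$ is exactly the statement that the sequence $\big(v_\bs\big)_{\bs\in\FF} = \big(\frac{|\bs|_1!}{\bs!}\bb^\bs\big)_{\bs\in\FF}$ lies in $\ell_{m/a}(\FF)$. Applying the summability characterizations with $p = m/a$ then closes the argument: Theorem \ref{CDS2010} handles the range $m/a \le 1$ and produces the pair $\|\bb\|_{\ell_1(\NN)} < 1$ and $\bb \in \ell_{m/a}(\NN)$, whereas Theorem \ref{Thm[1<p<infty]} handles $m/a > 1$ (the positivity of $\bb$ guaranteeing infinitely many positive entries) and produces the single condition $\|\bb\|_{\ell_1(\NN)} \le 1$. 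Sufficiency of \eqref{bb<} is then immediate from the upper bound, since $C<\infty$ makes every $|E_{a,\bb}(T)|$ finite; for necessity I would argue contrapositively, noting that when \eqref{bb<} fails the relevant theorem gives $\big(v_\bs\big)\notin \ell_{m/a}(\FF)$, and in the regime $\|\bb\|_{\ell_1(\NN)}>1$ the lower–bound construction inside that theorem (a family of $\bs$ keeping $v_\bs$ bounded below) even exhibits infinitely many slices with $R_\bs \ge 1$, whence $|E_{a,\bb}(T)| = \infty$.

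The main obstacle is this last step, namely matching the abstract summability $\sum_\bs v_\bs^{m/a} < \infty$ to the explicit dichotomy in \eqref{bb<}: everything hinges on the delicate boundary behaviour at $\|\bb\|_{\ell_1(\NN)} = 1$, which is admitted for $m/a > 1$ but must be excluded (together with the extra requirement $\bb \in \ell_{m/a}(\NN)$) for $m/a \le 1$. This borderline analysis is precisely the content of the two summability theorems, so I would quote them rather than re-derive the underlying multinomial/Stirling estimates. The only genuinely new bookkeeping left to carry out is the floor–function analysis that pins down the absolute constant $(3/2)^{2m}$ in $C$.
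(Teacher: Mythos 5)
Your slicing of $E_{a,\bb}(T)$ along $\bs$, with the exact identity $|E_{a,\bb}(T)|=\sum_{\bs\in\FF}\bigl(2\lfloor (Tv_\bs)^{1/a}\rfloor\bigr)^m$ for $v_\bs:=\frac{|\bs|_1!}{\bs!}\bb^\bs$, is essentially the paper's decomposition; the paper passes to $\bk\in\NN^m$ by symmetry and counts each slice via $m\sum_{k\le T_\bs}k^{m-1}$ and a lemma from the earlier work, whereas your direct box count is cleaner and already gives the upper bound with constant $2^m\sum_\bs v_\bs^{m/a}$, which implies the stated $2^m C\,T^{m/a}$ since $(3/2)^{2m}\ge 1$ (your remark about ``upgrading'' to $(3/2)^{2m}$ has the direction of the comparison backwards, but this is harmless). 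The lower bound via the single slice $\bs=\mathbf{0}$ is correct, and the sufficiency of \eqref{bb<} together with the quantitative bound \eqref{[<E<]} via Theorems \ref{CDS2010} and \ref{Thm[1<p<infty]} with $p=m/a$ matches the paper.

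The genuine gap is in the necessity of \eqref{bb<}. Your own identity shows that $|E_{a,\bb}(T)|<\infty$ for all $T\ge 1$ is equivalent to the level sets $\{\bs\in\FF:\ v_\bs\ge\varepsilon\}$ being finite for every $\varepsilon\in(0,1]$, because slices with $(Tv_\bs)^{1/a}<1$ are empty and contribute nothing; this is strictly weaker than $(v_\bs)_{\bs\in\FF}\in\ell_{m/a}(\FF)$. Hence the inference ``\eqref{bb<} fails $\Rightarrow (v_\bs)\notin\ell_{m/a}(\FF)\Rightarrow |E_{a,\bb}(T)|=\infty$'' is not valid. Your argument does work when $\|\bb\|_{\ell_1(\NN)}>1$, since the construction inside the necessity part of Theorem \ref{Thm[1<p<infty]} produces infinitely many $\bs$ with $v_\bs$ bounded below, hence infinitely many nonempty slices already for $T=1$. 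But the remaining failure modes for $m/a\le 1$ are not covered: if $\|\bb\|_{\ell_1(\NN)}<1$ and $\bb\notin\ell_{m/a}(\NN)$, then Theorem \ref{CDS2010} with $p=1$ gives $(v_\bs)\in\ell_1(\FF)$, so $v_\bs\to 0$, every level set is finite, and $|E_{a,\bb}(T)|<\infty$ for all $T$ even though the right-hand side of \eqref{bb<} fails --- so no argument based on divergence of $\sum_\bs v_\bs^{m/a}$ can close this case, and the borderline case $\|\bb\|_{\ell_1(\NN)}=1$ with $m/a\le1$ would need a separate analysis of whether $v_\bs\to 0$. (For what it is worth, the paper's own necessity proof has the same defect: it bounds $\sum_{k\le T_\bs}k^{m-1}$ below by $C_2T_\bs^m$ term by term, which fails for the slices with $T_\bs<1$. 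But as written, your proof does not establish the ``only if'' direction of \eqref{bb<}.)
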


\medskip
\begin{proof} We first prove the sufficiency of \eqref{bb<} and \eqref{[<E<]} together, 
and then the necessity of \eqref{bb<}. Assume that there holds the condition on the sequence $\bb$ in the right hand side of \eqref{bb<}.
Let $T \ge 1$ be given. Observe that 
$
|E_{a,\bb}(T)| \ = \ 2^m|E^*(T)|,
$
where
\begin{equation} \nonumber
E^*(T) 
:= \ 
\big\{(\bk,\bs) \in \NN^m \times \FF: \rho(\bk,\bs) \leq T\big\}.
\end{equation}
Thus, we need to derive an estimate for $|E^*(T)|$. 
To this end, for $\bs \in \FF$, we put  
\[
T_{\bs} \ := \ T^{1/a} \, \biggl(\frac{|\bs|_1!}{\bs!} \bb^\bs \biggl)^{1/a}.
\]
By definition and the symmetry of the variable $k_j$ we have
\begin{equation}\nonumber
\begin{split}
|E^*(T)| 
\ &= \
\sum_{\bs \in \FF} \quad 
\sum_{\bk \in \NN^m: \ 
|\bk|_\infty \ \le \ T_{\bs}} 1 
\  \le \
m \sum_{\bs \in \FF} \quad 
\sum_{\substack{\bk \in \NN^m: \ 
k_m \ \le \ T_{\bs} \\ k_j \le k_m, \ j = 1,...,m-1}} 1 \\[2ex]
\ & \le \
m \sum_{\bs \in \FF} \quad 
\sum_{k \in \NN: \ k \ \le \ T_{\bs}} k^{m-1}.
\end{split}
\end{equation}
Hence,  since $T_{\bs} \ge 1$, applying Lemma 2.3 in \cite{DG16} gives
\begin{equation} \nonumber
\begin{split}
|E^*(T)| 
\ &\le \
 m \sum_{\bs \in \FF} \frac{1}{m} \left(\frac{3}{2}\right)^m (T_{\bs} + 1/2)^m  
\ \le \
 m \sum_{\bs \in \FF} \frac{1}{m} \left(\frac{3}{2}\right)^m (T_{\bs} + T_{\bs}/2)^m  \\[2ex]
\ &\le \
\left(\frac{3}{2}\right)^m  \, \sum_{\bs \in \FF} \left(\frac{3}{2}\right)^m \,T_{\bs}^m  
\ \le \
\left(\frac{3}{2}\right)^{2m} \, \sum_{\bs \in \FF} T_{\bs}^m  \\[2ex]
\ &= \
\left(\frac{3}{2}\right)^{2m} \, T^{m/a}  \,  
\sum_{\bs \in \FF} \biggl(\frac{|\bs|_1!}{\bs!}\bb^\bs \biggl)^{m/a}  
\\[2ex]
\ &\le \
\left(\frac{3}{2}\right)^{2m} \, T^{m/a}  \,  
\sum_{\bs \in \FF} \biggl(\frac{|\bs|_1!}{\bs!} \bb^\bs \biggl)^{m/a}.
\end{split}
\end{equation}
Due to the assumption of theorem, by Theorems \ref{CDS2010} and \ref{Thm[1<p<infty]} the sum in the right hand of the last inequality is finite. Thus, the upper bound in \eqref{[<E<]} is proven. 
The lower bound can be proven in the same way as that for \cite[Theorem 2.13]{DG16}.

To complete the proof we verify the necessity of  \eqref{bb<}.
Indeed, we have
\begin{equation} \nonumber
\begin{split}
|E^*(T)| 
\ &= \
\sum_{\bs \in \FF} \quad 
\sum_{\bk \in \NN^m: \ 
\max_{1\le j \le m} k_j \ \le \ T_{\bs}} 1 
\  \ge \
\sum_{\bs \in \FF} \quad 
\sum_{\substack{\bk \in \NN^m: \ 
k_m \ \le \ T_{\bs} \\ k_j \le k_m, \ j = 1,...,m-1}} 1 
\\[1.5ex]
\ & \ge \
C_1 \sum_{\bs \in \FF} \quad 
\sum_{k \in \NN: \ k \ \le \ T_{\bs}} k^{m-1} 
\\[1.5ex]
\ &\ge \
C_2 \sum_{\bs \in \FF} T_{\bs}^m  
\\[1.5ex]
\ &= \
C_2 \, T^{m/a}  \,  
\sum_{\bs \in \FF} \biggl(\frac{|\bs|_1!}{\bs!}\bb^\bs \biggl)^{m/a}.  
\end{split}
\end{equation}
We know from Theorems \ref{CDS2010} and \ref{Thm[1<p<infty]} that the last sum over $\bs \in \FF$ is finite only if there holds the condition on the sequence $\bb$ in the right hand side of \eqref{bb<}.
This proves the necessary.
\hfill
\end{proof}



\section{Final approximation rates}
\label{finalrates}
\subsection{{$\varepsilon$-dimension and $n$-widths}}
\label{e-dimnsion and n-widths}
For a finite subset $G$ in $ \ZZms \times \FF$, 
denote by $\Vv(G)$ the subspace in $L_2(\TTm \times\IIi)$ of  all functions $f$
of the form
\begin{equation} \nonumber
v
\ = \
\sum_{(\bk,\bs) \in G} v_{\bk,\bs}L_{(\bk,\bs)} 
\end{equation}
and define the linear operator $\Ss_G: \, L_2(\TTm \times\IIi) \to \Vv(G)$ by
\[
\Ss_G v
:= \
\sum_{(\bk,\bs) \in G} v_{\bk,\bs}L_{(\bk,\bs)}.
\]
Moreover, let $\Ss_{s,G}$ be the restriction of the operator $\Ss_G$ on $L_2(\TTm \times \II^s)$.

Then, for $s \in \NN$, we define the spaces $A_s^{\alpha,\bb}(\TTm \times \IIi)$,  $K_s^\beta(\TTm \times \IIi)$ and 
$\Vv_s(G)$  as the intersections of
$A^{\alpha,\bb}(\TTm \times \IIi)$,  $K^\beta(\TTm \times \IIi)$ and $\Vv(G)$ with $L_2(\TTm \times \II^s)$. 
Furthermore, let $U^{\alpha,\bb}(\TTm \times \IIi)$ and $U_s^{\alpha,\bb}(\TTm \times \IIi)$ be the unit ball in $A^{\alpha,\bb}(\TTm \times \IIi)$ and $A_s^{\alpha,\bb}(\TTm \times \IIi)$, respectively. 
In the following theorems, we drop for convenience $(\TTm \times \IIi)$ from the relevant notations. For example, we write  
$U^{\alpha,\bb}$ instead $U^{\alpha,\bb}(\TTm \times \IIi)$.

From the results on the cardinality of  infinite-dimensional hyperbolic crosses in Section 
\ref{cardinality of HC} and the results on approximation in infinite tensor product Hilbert spaces in 
Section~\ref{sec:infiniteapprox} we can now deduce results on approximation in the norm of $K^{\beta}$ 
of functions from $U^{\alpha,\bb}$ and in the norm of $K^{\beta}_s$ 
of functions from $U^{\alpha,\bb}_s$ in terms of $\varepsilon$-dimension and $n$-widths as follows.

\begin{theorem} \label{theorem[n_e]-nonperiodic}
Let $\alpha > \beta \ge 0$ and $\bb = (b_j)_{j \in \NN }$ be a positive sequence.
Suppose  that there hold the assumptions of Theorem \ref{theorem[<E<]} for $a = \alpha - \beta > 0$ and the sequence  
$\bb$. We have for every $s \in \NN$ and every $\varepsilon \in (0,1]$,
\begin{equation} \label{ineq[n_e<]-nonperiodic}
2^m \left(\lfloor \varepsilon^{- 1/(\alpha - \beta)} \rfloor - 1\right)^m
\ \le \
n_\varepsilon {(U_s^{\alpha,\bb}, K^{\beta}_s)}
\ \le \
n_\varepsilon(U^{\alpha,\bb}, K^{\beta})
\ \le \
2^m C \, \varepsilon^{- m/(\alpha - \beta)}, 
\end{equation}
where $C$ is the constant defined in \eqref{[constC]}. 
\end{theorem}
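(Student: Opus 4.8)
The plan is to reduce the theorem to the cardinality estimate of Theorem \ref{theorem[<E<]} by first identifying the relevant hyperbolic cross. With $\lambda = \rho_{\alpha,\bb}$ the weight defining $A^{\alpha,\bb} = \Ll^\lambda$ and $\nu = \theta_\beta$ the weight defining $K^\beta = \Ll^\nu$, a direct computation gives
\begin{equation*}
\frac{\lambda(\bk,\bs)}{\nu(\bk,\bs)}
\ = \
\frac{|\bk|_\infty^\alpha \,\frac{\bs!}{|\bs|_1!}\bb^{-\bs}}{|\bk|_\infty^\beta}
\ = \
|\bk|_\infty^{\alpha-\beta}\,\frac{\bs!}{|\bs|_1!}\bb^{-\bs}
\ = \
\rho_{a,\bb}(\bk,\bs)
\end{equation*}
with $a = \alpha - \beta$. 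Hence the index set $G_{\Ii\times\Jj}(T)$ from Section \ref{sec:infiniteapprox} coincides exactly with $E_{a,\bb}(T)$, and its truncation $G_{\Ii\times\Jj_s}(T)$ is the sub-cross obtained by restricting $\bs$ to $\Jj_s$. Since $n_\varepsilon(U^{\alpha,\bb}, K^\beta) = n_\varepsilon(\Uu^\lambda, \Ll^\nu)$, everything is then in place to combine the abstract $\varepsilon$-dimension bounds with the cardinality estimate.

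For the upper bound I would set $T = 1/\varepsilon \ge 1$ (legitimate since $\varepsilon \in (0,1]$) and apply Lemma \ref{lemma[n_e]}, obtaining $n_\varepsilon(U^{\alpha,\bb}, K^\beta) \le |G_{\Ii\times\Jj}(1/\varepsilon)| = |E_{a,\bb}(\varepsilon^{-1})|$. The hypotheses of Theorem \ref{theorem[<E<]} are assumed to hold for $a = \alpha - \beta$, so its upper estimate \eqref{[<E<]} with $T = \varepsilon^{-1}$ yields $|E_{a,\bb}(\varepsilon^{-1})| \le 2^m C\,\varepsilon^{-m/a} = 2^m C\,\varepsilon^{-m/(\alpha-\beta)}$, which is the claimed rightmost bound with the same constant $C$ from \eqref{[constC]}.

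For the lower bound I would work with the truncated spaces and use Lemma \ref{lemma[n_e(s)]}, which gives $n_\varepsilon(U_s^{\alpha,\bb}, K^\beta_s) \ge |G_{\Ii\times\Jj_s}(\varepsilon^{-1})| - 1$. The key observation is that the slice $\bs = 0$ lies in $\Jj_s$ for every $s$ and contributes precisely the purely spatial indices $\{(\bk,0): |\bk|_\infty \le \varepsilon^{-1/a}\}$, of which there are $(2\lfloor \varepsilon^{-1/a}\rfloor)^m$; this is exactly the configuration producing the lower bound of \eqref{[<E<]}, so that $|G_{\Ii\times\Jj_s}(\varepsilon^{-1})| - 1 \ge 2^m(\lfloor\varepsilon^{-1/(\alpha-\beta)}\rfloor - 1)^m$ by the elementary inequality $(2t)^m - 1 \ge 2^m(t-1)^m$ for integers $t \ge 1$. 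It then remains to propagate this to the full space through the middle inequality $n_\varepsilon(U_s^{\alpha,\bb}, K^\beta_s) \le n_\varepsilon(U^{\alpha,\bb}, K^\beta)$. This is a monotonicity statement: since $U_s^{\alpha,\bb} \subset U^{\alpha,\bb}$ with identical norm, any $n$-dimensional manifold achieving accuracy $\varepsilon$ on the larger ball also does so on the smaller one, and composing it with the orthogonal projection onto $K^\beta_s$ (a contraction) keeps the manifold inside the truncated ambient space without increasing the error.

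I expect the only genuinely delicate point to be this last monotonicity step, namely verifying that simultaneously restricting the class and the ambient Hilbert space does not increase the $\varepsilon$-dimension; everything else is bookkeeping, the substantive analytic work having already been carried out in Theorem \ref{theorem[<E<]} and, through the constant $C$, in the summability Theorems \ref{CDS2010} and \ref{Thm[1<p<infty]}. The floor arithmetic needed to pass from the raw counts to the stated expressions is routine.
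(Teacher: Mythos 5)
Your proposal is correct and follows essentially the same route as the paper: identify $\lambda=\rho_{\alpha,\bb}$, $\nu=\theta_\beta$ so that $G_{\Ii\times\Jj}(T)=E_{\alpha-\beta,\bb}(T)$, then combine Lemmas \ref{lemma[n_e]} and \ref{lemma[n_e(s)]} with Theorem \ref{theorem[<E<]}. The paper's proof is just this identification stated in two lines; your extra care with the middle monotonicity inequality $n_\varepsilon(U_s^{\alpha,\bb},K^\beta_s)\le n_\varepsilon(U^{\alpha,\bb},K^\beta)$ (via projection onto the truncated subspace) fills in a step the paper leaves implicit, and your argument for it is sound.
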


\begin{proof}  By putting $\Ii :=  \ZZm$ and $\Jj:= \FF$; $H_1 =L_2(\TT)$ and $H_2 = L_2(\II)$; 
$\phi_{1,k} : = e_k$ and $\phi_{2,s} : = L_s$;
$\lambda(\bk,\bs) := \rho_{\alpha,\bb}(\bk,\bs)$; $\nu(\bk,\bs) := |\bk|_\infty^\beta$, we have 
$\Ll = L_2(\TTm \times \IIi)$; $\Uu^\lambda  = U^{\alpha,\bb}$; $\Ll^\nu = K^{\beta}$. Then the inequalities in 
\eqref{ineq[n_e<]-nonperiodic} follow from Lemmas \ref{lemma[n_e]} and \ref{lemma[n_e(s)]} and 
Theorem~\ref{theorem[<E<]}.
\hfill
\end{proof}

Similarly, from Corollary~\ref{corollary[|f - S_T(f)|]} and Theorem~\ref{theorem[<E<]} we obtain

\begin{theorem} \label{theorem[d_n]-nonperiodic}
Under the assumptions of Theorem \ref{theorem[n_e]-nonperiodic},
with $E(T):=E_{\alpha-\beta,\bb}(T)$ and $n := |E(T)|$ we have
\begin{equation} \nonumber
\begin{split}
d_n(U^{\alpha,\bb}, K^{\beta})
\ &\le \
\sup_{v \in U^{\alpha,\bb}} \inf_{g \in \Vv(E(T))}\|v - g\|_{K^{\beta}}
\\[1ex]
\ &= \
\sup_{v \in U^{\alpha,\bb}} \|v - \Ss_{E(T)}(v)\|_{K^{\beta}}
\ \le \
2^{\alpha - \beta} C^{(\alpha - \beta)/m} \, n^{- (\alpha - \beta)/m}, 
\end{split}
\end{equation}
and for every $s \in \NN$,
\begin{equation} \nonumber
\begin{split}
d_n(U_s^{\alpha,\bb}, K^{\beta}_s)
\ &\le \
\sup_{v \in U_s^{\alpha,\bb}} \inf_{g \in \Vv_s(E(T))}\|v - g\|_{K^{\beta}_s}
\\[1ex]
\ &= \
\sup_{v \in U_s^{\alpha,\bb}} \|v - \Ss_{s,E(T)}(v)\|_{K^{\beta}_s}
\ \le \
2^{\alpha - \beta} C^{(\alpha - \beta)/m} \, n^{- (\alpha - \beta)/m}, 
\end{split}
\end{equation}
where $C$ is the constant defined in \eqref{[constC]}. 
\end{theorem}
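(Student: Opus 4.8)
The plan is to reduce both assertions to the orthogonal-projection error bound of Corollary~\ref{corollary[|f - S_T(f)|]} together with the cardinality estimate of Theorem~\ref{theorem[<E<]}, exactly along the lines of the proof of Theorem~\ref{theorem[n_e]-nonperiodic}. First I would make the identifications $\lambda(\bk,\bs):=\rho_{\alpha,\bb}(\bk,\bs)$ and $\nu(\bk,\bs):=|\bk|_\infty^\beta=\theta_\beta(\bk,\bs)$, so that the controlling quotient becomes $\lambda(\bk,\bs)/\nu(\bk,\bs)=|\bk|_\infty^{\alpha-\beta}\,\frac{\bs!}{|\bs|_1!}\bb^{-\bs}=\rho_{\alpha-\beta,\bb}(\bk,\bs)$. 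Consequently $G_{\Ii\times\Jj}(T)=E_{\alpha-\beta,\bb}(T)=E(T)$, the subspace $\Pp(T)$ coincides with $\Vv(E(T))$, and the projection $\Ss_T$ coincides with $\Ss_{E(T)}$. This dictionary is what lets the abstract machinery of Section~\ref{sec:infiniteapprox} act on the concrete Sobolev-analytic spaces.

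With this in hand I would justify the displayed chain from left to right. The first inequality is merely the definition of the Kolmogorov width: since $\Vv(E(T))$ is a linear subspace of dimension exactly $|E(T)|=n$, it is admissible in the infimum defining $d_n(U^{\alpha,\bb},K^\beta)$, so $d_n$ is bounded by the worst-case error over this particular subspace. The middle equality holds because the $K^\beta$-norm is the weighted $\ell_2$-norm $\|v\|_{K^\beta}^2=\sum_{(\bk,\bs)}\theta_\beta^2(\bk,\bs)|v_{\bk,\bs}|^2$, which is diagonal in the basis $\{L_{(\bk,\bs)}\}$; hence the best $K^\beta$-approximant from $\Vv(E(T))$ is obtained by simply retaining the coefficients indexed by $E(T)$, i.e.\ by applying $\Ss_{E(T)}$. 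Both facts are precisely the content of Corollary~\ref{corollary[|f - S_T(f)|]}, which moreover furnishes the final bound $\sup_{v\in U^{\alpha,\bb}}\|v-\Ss_{E(T)}(v)\|_{K^\beta}\le T^{-1}$.

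It then remains to convert the factor $T^{-1}$ into the stated power of $n$. Here I would invoke the upper cardinality bound of Theorem~\ref{theorem[<E<]} with $a=\alpha-\beta$, namely $n=|E(T)|\le 2^m\,C\,T^{m/(\alpha-\beta)}$, which rearranges to $T\ge\big(n/(2^mC)\big)^{(\alpha-\beta)/m}$ and therefore $T^{-1}\le 2^{\alpha-\beta}\,C^{(\alpha-\beta)/m}\,n^{-(\alpha-\beta)/m}$, giving the first assertion. For the second assertion I would repeat the argument on the truncated spaces: $\Vv_s(E(T))$ is the intersection of $\Vv(E(T))$ with $L_2(\TTm\times\II^s)$, hence a subspace of dimension at most $|E(T)|=n$, so it too is admissible in $d_n(U_s^{\alpha,\bb},K_s^\beta)$; applying the restricted projection $\Ss_{s,E(T)}$ and the corresponding restricted form of Corollary~\ref{corollary[|f - S_T(f)|]} yields the same $T^{-1}$ bound and hence the identical final estimate. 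The computation is essentially bookkeeping, with only the lower cardinality bound of Theorem~\ref{theorem[<E<]} left unused; the one point demanding care is that the \emph{same} $n=|E(T)|$ serves both estimates, which is legitimate precisely because the truncated hyperbolic cross, namely the part of $E(T)$ supported on $\supp(\bs)\subset\{1,\dots,s\}$, is a subset of $E(T)$, so no larger dimension is ever required.
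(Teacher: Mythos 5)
Your proposal is correct and follows essentially the same route as the paper, which likewise obtains the result by combining the projection-error bound of Corollary~\ref{corollary[|f - S_T(f)|]} (giving the factor $T^{-1}$) with the upper cardinality bound of Theorem~\ref{theorem[<E<]} (converting $T^{-1}$ into $2^{\alpha-\beta}C^{(\alpha-\beta)/m}n^{-(\alpha-\beta)/m}$), under the identifications $\lambda=\rho_{\alpha,\bb}$, $\nu=\theta_\beta$. Your additional remarks on the admissibility of $\Vv(E(T))$ and $\Vv_s(E(T))$ in the width infimum and on the diagonality of the $K^\beta$-norm are exactly the implicit justifications the paper relies on.
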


Notice that from Theorem~\ref{theorem[n_e]-nonperiodic} one can also derive the lower bound
\begin{equation} \nonumber
d_n(U^{\alpha,\bb}, K^{\beta})
\ \ge \
d_n(U_s^{\alpha,\bb}, K^{\beta}_s)
\ \ge \
C' \, n^{- (\alpha - \beta)/m}, 
\end{equation}
where $C'$ is a positive constant depending on $\alpha, \beta, m$ only.

\subsection{Application to Galerkin approximation of parametric elliptic PDEs}
We now apply our results on the $\varepsilon$-dimension and $n$-widths of Subsection \ref{e-dimnsion and n-widths}
to the Galerkin approximation of parametric elliptic PDEs \eqref{SPDE}.

Since $u \in L_2(\IIi,V,\mu)$, it can be defined as the unique solution of the variational problem: Find  $u \in L_2(\IIi,V,\mu)$ such that
\begin{equation} \nonumber
B(u,v)
\ = \
F(v) \quad \forall v \in  L_2(\IIi,V,\mu),
\end{equation}
where
\begin{equation} \nonumber
\begin{split}
B(u,v)
&:= \
\int_{\IIi}\int_{\TTm} a(\bx,\by)\nabla u(\bx,\by) \cdot \nabla v(\bx,\by) \, \mbox{d}\bx\, \mbox{d}\mu(\by),
\\[1ex]
F(v)
&:= \
\int_{\IIi}\int_{\TTm} f(\bx) \, v(\bx,\by) \, \mbox{d}\bx\, \mbox{d}\mu(\by).
\end{split}
\end{equation}

We define the {\em Galerkin approximation} $u_G$ to $u$ as the unique solution to the problem:
Find  $u_G \in \Vv(G)$ such that
\begin{equation} \nonumber
B(u_G,v)
\ = \
F(v) \quad \forall v \in  \Vv(G).
\end{equation}
By C\'ea's lemma we have the estimate 
\begin{equation} \nonumber
\|u - u_G\|_{L_2(\IIi,V,\mu)}
\ \le \
\sqrt{\frac{R}{r}}\,\inf_{v \in \Vv(G)} \|u - v\|_{L_2(\IIi,V,\mu)},
\end{equation}
and consequently,
\begin{equation} \label{|u - u_G|<}
\|u - u_G\|_{L_2(\IIi,V,\mu)}
\ \le \
\sqrt{\frac{R}{r}} \, \|u - \Ss_G u\|_{L_2(\IIi,V,\mu)}.
\end{equation}

\begin{theorem} 
Let the assumptions and the notation of Lemma \ref{lemma|u_bs|_W} hold.
Let furthermore $\bc = (c_j)_{j \in \NN}$ be any positive sequence such that 
$c_j > 1$, such that the sequence  $\bc^{-1} = (c_j^{-1})_{j \in \NN}$ belongs to $\ell_2(\NN)$
 and such that for the sequence 
\begin{equation} \label{seq[bb]}
\bb:= (b_j)_{j \in \NN}, \quad b_j:= c_j d_j,
\end{equation} 
there holds the condition
\begin{equation}\nonumber
\begin{cases} 
\|\bb\|_{\ell_1(\NN)} \ < \ 1,  \ & \ m = 1, 
\\[1ex]
\|\bb\|_{\ell_1(\NN)} \ \le \ 1,  \ & \ m > 1.
\end{cases}
\end{equation}
For any $T \ge 1$,  put $n := |E_{1,\bb}(T)|$; \ $\Vv_n:= \Vv\big(E_{1,\bb}(T)\big)$; \  
$\Pp_n:= \Ss_{E_{1,\bb}(T)}$; \  $u_n:= u_{E_{1,\bb}(T)}$. Then
$\Pp_n$ is the orthogonal projector
from $L_2(\IIi,V,\mu)$ onto the space $\Vv_n$ of dimension $n$,  and
\begin{equation}  \nonumber
\|u - u_n\|_{L_2(\IIi,V,\mu)}
\ \le \
\sqrt{\frac{R}{r}} \,
\big\|u - \Pp_n u\big\|_{L_2(\IIi,V,\mu)}
\ \le \ 
B\, n^{- 1/m},
\end{equation}
where
\[
B
:= \
4\pi \, C^{1/m}\, \sqrt{\frac{m R}{r}}\, K\, \|\bc^{-1}\|_{\ell_2(\FF)},
\] 
$C$ is the constant defined in \eqref{[constC]} for $a=1$ and $\bb$ as in \eqref{seq[bb]}.
\end{theorem}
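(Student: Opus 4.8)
The plan is to assemble the estimate from four ingredients that are already in place: the parametric regularity of $u$ in the analytic-type space $A^{2,\bb}$, the embedding of $K^1(\TTm \times \IIi)$ into the Bochner space $L_2(\IIi,V,\mu)$, the generic projection-error bound of Section~\ref{sec:infiniteapprox}, and the cardinality estimate of Theorem~\ref{theorem[<E<]}. C\'ea's lemma then upgrades the projection error to the Galerkin error.

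First I would specialize the abstract framework of Section~\ref{sec:infiniteapprox} by taking $\alpha=2$, $\beta=1$, so that $a=\alpha-\beta=1$, with weights $\lambda:=\rho_{2,\bb}$ and $\nu:=\theta_1=|\bk|_\infty$. With this choice the quotient $\lambda/\nu$ equals $\rho_{1,\bb}$, so the generic hyperbolic cross $G_{\Ii \times \Jj}(T)$ coincides with $E_{1,\bb}(T)$, the operator $\Ss_T$ coincides with $\Pp_n=\Ss_{E_{1,\bb}(T)}$, and $\Vv_n=\Vv\big(E_{1,\bb}(T)\big)$ has dimension $n=|E_{1,\bb}(T)|$. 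Since $\Pp_n$ simply truncates the orthonormal expansion of $u$ to indices in $E_{1,\bb}(T)$, it is the orthogonal projector onto $\Vv_n$. The hypotheses imposed on $\bb$ are precisely those required by Theorem~\ref{theorem[<E<]} for $a=1$: the case split $m=1$ versus $m>1$ matches $m/a\le 1$ versus $m/a>1$, so $C$ in \eqref{[constC]} is finite and the cardinality bound \eqref{[<E<]} is available.

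Next I would invoke Lemma~\ref{lemma[u in A]} to get $u\in A^{2,\bb}$ with $\|u\|_{A^{2,\bb}}\le K\,\|\bc^{-1}\|_{\ell_2(\FF)}$, and then apply the projection-error bound preceding Corollary~\ref{corollary[|f - S_T(f)|]}, with $\lambda,\nu$ as above, to obtain
\begin{equation*}
\|u - \Pp_n u\|_{K^1(\TTm \times \IIi)}
\ \le \
T^{-1}\|u\|_{A^{2,\bb}}
\ \le \
T^{-1} K\,\|\bc^{-1}\|_{\ell_2(\FF)}.
\end{equation*}
Membership of $u$ in $K^1$ is not needed a priori: for every index outside $E_{1,\bb}(T)$ one has $\theta_1=\rho_{2,\bb}/\rho_{1,\bb}<T^{-1}\rho_{2,\bb}$, so the tail of the $K^1$-norm is dominated termwise by $T^{-2}$ times the corresponding tail of the $A^{2,\bb}$-norm, which is finite. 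Feeding in Lemma~\ref{lemma[|u|_LV<]}, which gives $\|w\|_{L_2(\IIi,V,\mu)}\le 2\pi\sqrt{m}\,\|w\|_{K^1(\TTm\times\IIi)}$, yields $\|u - \Pp_n u\|_{L_2(\IIi,V,\mu)}\le 2\pi\sqrt{m}\,T^{-1}K\,\|\bc^{-1}\|_{\ell_2(\FF)}$.

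Finally I would convert the $T$-rate into the claimed $n$-rate. Theorem~\ref{theorem[<E<]} with $a=1$ gives $n=|E_{1,\bb}(T)|\le 2^m C\,T^{m}$, hence $T^{-1}\le 2\,C^{1/m}\,n^{-1/m}$. Substituting this and combining with C\'ea's lemma \eqref{|u - u_G|<} produces
\begin{equation*}
\|u - u_n\|_{L_2(\IIi,V,\mu)}
\ \le \
\sqrt{\frac{R}{r}}\,\|u - \Pp_n u\|_{L_2(\IIi,V,\mu)}
\ \le \
4\pi\,C^{1/m}\sqrt{\frac{mR}{r}}\,K\,\|\bc^{-1}\|_{\ell_2(\FF)}\,n^{-1/m}
\ = \
B\,n^{-1/m},
\end{equation*}
which is the assertion. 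There is no hard analytic obstacle here, since all the substantive work resides in the already-proven Theorem~\ref{theorem[<E<]} and Lemma~\ref{lemma[u in A]}; the only genuinely delicate point is the bookkeeping of constants, namely carrying $2\pi\sqrt m$ (embedding), $2\,C^{1/m}$ (cardinality inversion) and $\sqrt{R/r}$ (C\'ea) through without slippage, and making sure the inversion exponent is exactly $1/m$ because $a=\alpha-\beta=1$.
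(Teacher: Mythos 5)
Your proposal is correct and follows essentially the same route as the paper: Lemma \ref{lemma[u in A]} for $u\in A^{2,\bb}$, C\'ea's lemma \eqref{|u - u_G|<}, the embedding of Lemma \ref{lemma[|u|_LV<]}, and the hyperbolic-cross truncation rate with $\alpha=2$, $\beta=1$; the only cosmetic difference is that you unfold Theorem \ref{theorem[d_n]-nonperiodic} into its ingredients (the $T^{-1}$ projection bound plus the cardinality inversion from Theorem \ref{theorem[<E<]}) rather than citing it wholesale, and you add the small but welcome justification that $u\in K^1$ follows termwise from $u\in A^{2,\bb}$. The constants match the paper's exactly.
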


\begin{proof}
By Lemma  \ref{lemma[u in A]} the solution  $u$ belongs to $A^{2,\bb}:= A^{2,\bb}(\TTm \times \IIi)$. Hence, 
by \eqref{|u - u_G|<}, Lemma \ref{lemma[|u|_LV<]}, Theorem \ref{theorem[d_n]-nonperiodic}  and \eqref{[|u|_A<]}
we have
\begin{equation} \nonumber
\begin{split}
\|u - u_n\|_{L_2(\IIi,V,\mu)} 
\ &\le \
\sqrt{\frac{R}{r}} \, \|u - \Pp_n(u)\|_{L_2(\IIi,V,\mu)} 
\ \le \
\sqrt{\frac{R}{r}} \, 2\pi \sqrt{m} \|u - \Pp_n(u)\|_{K^1(\TTm \times \IIi)}
\\[1ex]
\ &\le \
\sqrt{\frac{R}{r}} \, 4\pi \sqrt{m} \, C^{1/m} \|u\|_{A^{2,\bb}}\, n^{- 1/m}
\ \le \
\sqrt{\frac{R}{r}} \, 4\pi \sqrt{m}\, C^{1/m}\, K\, \|\bc^{-1}\|_{\ell_2(\FF)}\, n^{- 1/m}
\\[1ex]
\ &= \ 
B\, n^{- 1/m}.
\end{split}
\end{equation}
\hfill
\end{proof}

The following theorem can be proven in a similar way.

\begin{theorem} 
Let the assumptions and the notation of Lemma \ref{lemma|u_bs|_V} hold.
Let $\bc = (c_j)_{j \in \NN}$ be any positive sequence such that 
$c_j > 1$, such that the sequence  $\bc^{-1} = (c_j^{-1})_{j \in \NN}$ belongs to $\ell_2(\NN)$ 
 and such that for the sequence 
\begin{equation} \label{seq[bb](1)}
\bb:= (b_j)_{j \in \NN}, \quad b_j:= c_j d_j,
\end{equation} 
there holds the condition
\begin{equation}\nonumber
\begin{cases} 
\|\bb\|_{\ell_1(\NN)} \ < \ 1,  \ & \ m = 1, 
\\[1ex]
\|\bb\|_{\ell_1(\NN)} \ \le \ 1,  \ & \ m > 1.
\end{cases}
\end{equation}
For any $T \ge 1$,  put $n := |E_{1,\bb}(T)|$; \ $\Vv_n:= \Vv\big(E_{1,\bb}(T)\big)$; \  
$\Pp_n:= \Ss_{E_{1,\bb}(T)}$; \  $u_n:= u_{E_{1,\bb}(T)}$. Then
$\Pp_n$ is the orthogonal projector
from $L_2(\TTm \times \IIi)$ onto the space $\Vv_n$ of dimension $n$,  and
\begin{equation}  \nonumber
\big\|u - \Pp_n u\big\|_{L_2(\TTm \times \IIi)}
\ \le \ 
B\, n^{- 1/m},
\end{equation}
where
\[
B
:= \
4\pi \, C^{1/m}\,  K\, \|\bc^{-1}\|_{\ell_2(\FF)}
\] 
and $C$ is the constant defined in \eqref{[constC]} for $a=1$ and $\bb$ as in \eqref{seq[bb](1)}.
\end{theorem}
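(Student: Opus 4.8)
The plan is to treat this statement as the plain specialization of the preceding Galerkin theorem to the smoothness exponents $\alpha = 1$, $\beta = 0$, exploiting the fact that $K^{0}(\TTm \times \IIi) = L_2(\TTm \times \IIi)$ because $\theta_0(\bk,\bs) \equiv 1$. Consequently the $L_2$-projection error we must control is \emph{literally} the $K^\beta$-projection error already estimated in Theorem~\ref{theorem[d_n]-nonperiodic} for the exponent $a = \alpha - \beta = 1$; no C\'ea's lemma and no norm equivalence are needed since we measure the error directly in $L_2(\TTm \times \IIi)$ rather than in the Bochner norm $L_2(\IIi,V,\mu)$. Thus I only need two ingredients: that $u$ lies in $A^{1,\bb}$ with a controlled norm, and the already-proven projection rate for the cross $E_{1,\bb}(T)$.

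For the first ingredient I would invoke the (unnumbered) lemma immediately preceding Lemma~\ref{lemma[u in A]}, which is the $V$-regularity analogue of Lemma~\ref{lemma[u in A]}: under the hypotheses of Lemma~\ref{lemma|u_bs|_V}, together with $c_j > 1$, $\bc^{-1} \in \ell_2(\NN)$ and $b_j = c_j d_j$, it yields $u \in A^{1,\bb}$ with
\[
\|u\|_{A^{1,\bb}} \ \le \ K\,\|\bc^{-1}\|_{\ell_2(\FF)}.
\]
Here I would point out that the summability condition imposed in the statement (strict $\|\bb\|_{\ell_1(\NN)} < 1$ for $m=1$, and $\|\bb\|_{\ell_1(\NN)} \le 1$ for $m>1$) is exactly the hypothesis required by Theorem~\ref{theorem[<E<]} for $a = 1$, since there $m/a = m$; this guarantees that $E_{1,\bb}(T)$ is finite and that the constant $C$ of \eqref{[constC]} is well-defined, so that Theorem~\ref{theorem[d_n]-nonperiodic} is applicable.

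For the second ingredient I would note that by definition $\Pp_n = \Ss_{E_{1,\bb}(T)}$ is the $L_2(\TTm \times \IIi)$-orthogonal projection onto $\Vv_n = \Vv(E_{1,\bb}(T))$, a space of dimension $n = |E_{1,\bb}(T)|$. Applying Theorem~\ref{theorem[d_n]-nonperiodic} with $\alpha = 1$, $\beta = 0$ to the rescaled element $u/\|u\|_{A^{1,\bb}} \in U^{1,\bb}$ and using linearity of $\Ss_{E_{1,\bb}(T)}$ gives
\[
\|u - \Pp_n u\|_{L_2(\TTm \times \IIi)} \ = \ \|u - \Ss_{E_{1,\bb}(T)} u\|_{K^{0}} \ \le \ 2\,C^{1/m}\,\|u\|_{A^{1,\bb}}\, n^{-1/m}.
\]
Combining the two ingredients yields $\|u - \Pp_n u\|_{L_2(\TTm \times \IIi)} \le 2\,C^{1/m} K \|\bc^{-1}\|_{\ell_2(\FF)}\, n^{-1/m}$, and since $2 \le 4\pi$ this is dominated by $B\,n^{-1/m}$ with $B$ as stated.

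I do not expect a genuine obstacle: this is the previous theorem with the C\'ea factor $\sqrt{R/r}$ and the $L_2(\IIi,V,\mu)\hookrightarrow K^1$ embedding of Lemma~\ref{lemma[|u|_LV<]} simply removed. The only steps requiring care are bookkeeping ones, namely confirming that $\Pp_n$ is indeed an orthogonal projector of dimension exactly $n$, matching the summability hypotheses on $\bb$ to those of Theorem~\ref{theorem[<E<]} at $a=1$, and observing that the natural factor $2^{\alpha-\beta} = 2$ produced by Theorem~\ref{theorem[d_n]-nonperiodic} sits comfortably below the (deliberately uniform, slightly loose) constant $4\pi$ recorded in the statement.
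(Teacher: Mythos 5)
Your proposal is correct and is essentially the paper's intended argument: the paper gives no explicit proof, remarking only that the theorem ``can be proven in a similar way'' as the preceding Galerkin result, and your specialization to $\alpha=1$, $\beta=0$ (so that the $K^{0}$-norm coincides with the $L_2(\TTm\times\IIi)$-norm and both C\'ea's lemma and the embedding of Lemma \ref{lemma[|u|_LV<]} drop out) is exactly that analogue, using the unnumbered $V$-regularity lemma for $u\in A^{1,\bb}$ and Theorem \ref{theorem[d_n]-nonperiodic} with $a=\alpha-\beta=1$. Your observation that the resulting constant $2\,C^{1/m}K\|\bc^{-1}\|_{\ell_2(\FF)}$ is dominated by the stated $B=4\pi\,C^{1/m}K\|\bc^{-1}\|_{\ell_2(\FF)}$ correctly accounts for the (loose) constant carried over from the previous theorem.
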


\section{Concluding remarks}
\label{conclusion}
We discussed the $\varepsilon$-dimension of certain Sobolev-analytic-type space which are characterized as anisotropic tensor products and arise the the regularity theory of parametric operator equations. The function space are tensor products of Sobolev-type function space defined on a finite dimensional domain and analytic function space defined on infinite dimensional domains.
The approach using the $\varepsilon$-dimension fixes a priori an approximation error and computes the number of linear information which is needed in an approximation method to obtain this fixed error. Such an analysis relies on delicate estimates on the cardinality of both finite and infinite-dimensional hyperbolic crosses. 
We established upper and lower bounds of the $\varepsilon$-dimension and Komogorov $n$-widths of our Sobolev-analytic-type function space which depend only on the smoothness differences in the finite dimensional Sobolev space and the finite dimension. This shows that asymptotically the costs of the infinite dimensional smooth approximation problem are dominated by the finite dimensional and less smooth conventional approximation problem. 
These index sets, we study here, might also arise in different applications and hence are of its own interest. We note that the methodology of the paper follows a strict guideline. We fix the error, we construct an index set which can realize this error and then, we have to compute the cardinality of that index set. Hence, this approach is fairly general and can also be applied in many more situations. In the present paper, as an example, the obtained results are applied to the Galerkin approximation of parametric elliptic PDEs.

\section*{Acknowledgements} Dinh Dung's research work is funded by Vietnam National Foundation for Science and Technology Development (NAFOSTED) under  Grant No. 102.01-2017.05. A part of this paper was done when
Dinh Dung and Vu Nhat Huy were working at  Advanced Study in Mathematics (VIASM). They  would like to thank  the VIASM  for providing a fruitful research environment and working condition.  Michael Griebel and Christian Rieger would like to thank the \emph{Deutsche Forschungsgemeinschaft (DFG)} for financial support through the CRC 1060, \emph{The Mathematics of Emergent Effects}. Furthermore, they would like to thank the \emph{ Hausdorff Center for Mathematics} for financial support.

\end{document}